\newfont{\msam}{msam10}
\theoremstyle{plain}
        \newtheorem{theorem}{Theorem}[section]
        \newtheorem{lemma}[theorem]{Lemma}
        \newtheorem{remark}[theorem]{Remark}
        \newtheorem{proposition}[theorem]{Proposition}
        \newtheorem{corollary}[theorem]{Corollary}
\theoremstyle{definition}
        \newtheorem{definition}[theorem]{Definition}
\let\nc\newcommand
\nc{\la}{\label}
\def\bthm{\begin{theorem}}
\def\ethm{\end{theorem}}
\def\blemma{\begin{lemma}}
\def\elemma{\end{lemma}}
\def\bproof{\begin{proof}}
\def\eproof{\end{proof}}
\def\bprop{\begin{proposition}}
\def\eprop{\end{proposition}}
\def\bcor{\begin{corollary}}
\def\ecor{\end{corollary}}
\nc{\Hom}{{\rm{Hom}}}
\nc{\Ext}{{\rm{Ext}}}
\nc{\HOM}{\underline{\rm{Hom}}}
\nc{\EXT}{\underline{\rm{Ext}}}
\nc{\TOR}{\underline{\rm{Tor}}}
\nc{\End}{{\rm{End}}}
\nc{\GL}{{\rm{GL}}}
\nc{\PGL}{{\rm{PGL}}}
\nc{\SL}{{\rm{SL}}}
\nc{\PSL}{{\rm{PSL}}}
\nc{\Rep}{{\rm{Rep}}}
\nc{\ad}{{\rm{ad}}}
\nc{\dlim}{\varinjlim}
\newcommand{\tr}{{\rm{tr}}}
\newcommand{\rar}{\rightarrow}
\newcommand{\complex}{{\mathbb C}}
\newcommand{\reals}{{\mathbb R}}
\newcommand{\cala}{{\mathcal A}}
\newcommand{\calb}{{\mathcal B}}
\newcommand{\calc}{{\mathcal C}}
\newcommand{\cald}{{\mathcal D}}
\newcommand{\cale}{{\mathcal E}}
\newcommand{\calf}{{\mathcal F}}
\newcommand{\calh}{{\mathcal H}}
\newcommand{\calk}{{\mathcal K}}
\newcommand{\call}{{\mathcal L}}
\newcommand{\calm}{{\mathcal M}}
\newcommand{\calo}{{\mathcal O}}
\newcommand{\calr}{{\mathcal R}}
\newcommand{\calv}{{\mathcal V}}
\newcommand{\calw}{{\mathcal W}}
\begin{document}

\title{Hochschild Lefschetz Class for $\cald$-modules}
\author{Ajay Ramadoss}
\address{Departement Mathematik,
 ETH Z\"{u}rich, R\"{a}mistrasse 101, 8092 Z\"{u}rich}
\email{ajay.ramadoss@math.ethz.ch}
\author{Xiang Tang}
\address{Department of Mathematics, Washington University, St.
Louis, MO 63130, USA} \email{xtang@math.wustl.edu}
\author{Hsian-hua Tseng}
\address{Department of Mathematics, Ohio State University, Columbus, OH 43210, USA} \email{hhtseng@math.ohio-state.edu}
\keywords{equivariant Hochschild class, $D$-module}
\maketitle
\begin{abstract}
We introduce a notion of Hochschild Lefschetz class for a good coherent $\cald$-module on a compact complex manifold, and prove that this class is compatible with the direct image functor. We prove an orbifold Riemann-Roch formula for a $\cald$-module on a compact complex orbifold. 
\end{abstract}

\section{Introduction}

 In \cite{KS}, Kashiwara and Schapira systematically studied the Hochschild class for deformation
 quantization algebroids. As an application, they obtained a new way to define the Euler class of a good 
coherent $\cald$-module on a complex manifold, introduced by Schapira and Schneiders \cite{ss2}. 
In this paper, we aim to generalize the notion of Hochschild class to the equivariant setting. 

Let $M$ be a compact complex manifold and $\cald_M$  the sheaf of holomorphic differential operators on $M$.
  A coherent $\cald_M$-module $\calm$ is called ``good" if for any compact subset of $M$ there is a neighborhood in which $\calm$ admits a finite filtration $(\calm_k)$ by coherent $\cald_M$-submodules such that each quotient $\calm_k/\calm_{k-1}$ can be endowed with a good filtration. 
 We denote by $D^b(\cald_M)$ the bounded derived category of $\cald_M$-modules, and
 $D^b_{\text{coh}}(\cald_M)$ the full triangulated subcategory of $D^b(\cald_M)$ consisting of objects with 
coherent cohomologies. Let $X:=T^*M$ be the cotangent bundle of $M$. Following \cite{KS}, we consider the 
sheaf $\widehat{\cale}_X$ of formal microdifferential operators  on $X$. Let $\pi_M: X=T^*M\to M$ be the canonical projection. 
There is a natural flat embedding map  $\pi^{-1}_M \cald_M\hookrightarrow \widehat{\cale}_X$. 
This gives a natural functor from $D^b_{\rm coh}(\cald_M)$ to $D^b_{\rm coh}(\widehat{\cale}_X)$. 
Such a functor allows the use of microlocal techniques to study $\cald_M$-modules. 
Let $\calh\calh(\widehat{\cale}_X, \widehat{\cale}_X)$ be the $\complex$-sheaf\footnote{This is actually a slight abuse of terminology. In fact, $\calh\calh(\widehat{\cale}_X, \widehat{\cale}_X)$ is an object in the derived category of sheaves of $\complex$-vector spaces on $X$.} of Hochschild homologies of
 $\widehat{\cale}_X$ on $X$. For $\calm\in D^b_{\rm coh}(\cald_M)$ and an element $u\in Hom_{\cald_M}(\calm, \calm)$, Kashiwara and Schapira \cite{KS} introduced a {\em Hochschild class} $$hh(\calm, u)\in {H}^0_{{\rm supp}(\calm)}(X;\calh\calh(\widehat{\cale}_X, \widehat{\cale}_X)).$$  
 The image of the the Hochschild class $hh(\calm,u)$ under the 
quasi-isomorphism $$\calh\calh(\widehat{\cale}_X, \widehat{\cale}_X)\to \complex[\dim_X]$$ is called the 
{\em Euler class} of $(\calm, u)$. As Hochschild homology behaves well under direct images, the Hochschild class $hh(\calm, u)$ satisfies a nice formula \cite[Theorem 4.3.5]{KS} under the direct image functor. This formula is analogous to the direct image  property of the Euler class of $(\calm, u)$, which was proved by Schapira and Schneiders \cite{ss2}. 

In this paper we consider a holomorphic diffeomorphism $\gamma$ on $M$.  Let $\calm$ be a $\cald_M$-module. Then, the sheaf 
$\gamma_*\calm$ of $\complex$-vector spaces on $M$ has a natural $\cald_M$-module structure. 
This in turn gives a natural functor $\gamma_*: D^b_{\rm coh}(\cald_M)\to D^b_{\rm coh}(\cald_M)$.  
A similar construction and functor can be introduced for $\widehat{\cale}_X$-modules. Given an element $u\in Hom_{\cald_M}(\calm, \gamma_*(\calm))$, we introduce a  {\em Hochschild Lefschetz class}\footnote{See Eq. (\ref{eq:hoch-hom}) for the definition of $\calh\calh(\widehat{\cale}_X, \widehat{\cale}_X^\gamma)$.} $$hh^\gamma(\calm, u)\in H^0(X, \calh\calh(\widehat{\cale}_X, \widehat{\cale}_X^\gamma))$$ for $X=T^*M$. Our construction coincides with the Kashiwara-Schapira Hochschild class $hh(\calm, u)$ when $\gamma=id$. The {\em Lefschetz class} of $u$ is defined to be the image of $hh^\gamma(\calm, u)$ under the quasi-isomorphism
 $$\calh\calh(\widehat{\cale}_X, \widehat{\cale}_X^\gamma)\to \iota_!(\complex_{X^\gamma}[\dim(X^\gamma)]).$$ Like the Hochschild class, we prove that the Hochschild Lefschetz class satisfies nice formulas under the direct image functor. 
We expect that this approach will provide a relatively easy route to results about the Lefschetz class introduced
 by Guillermou \cite{G}. Let $\Gamma$ be a finite group acting on $M$ by holomorphic diffeomorphisms. We apply 
our developments to study the Hochschild class and the Euler class of a good 
$\Gamma$-equivariant coherent $\cald_M$-module $\calm$.
 In this situation, every $\gamma\in \Gamma$ naturally defines an element 
$\gamma$ in $Hom_{\cald_M}(\calm, \gamma_*(\calm))$.  We can use the expression 
$$
\frac{1}{|\Gamma|} \sum_{\gamma\in \Gamma} hh^\gamma(\calm, \gamma)
$$ 
to introduce the orbifold Hochschild class of $\calm$ on the quotient orbifold $Q_X:=X/\Gamma=T^*M/\Gamma$. 
We prove that this description of the orbifold Hochschild class for $\calm$ is 
equivalent to the more abstract 
definition that arises by working with the sheaf of algebras
 $\cald_M\rtimes \Gamma$ (and $\widehat{\cale}_X\rtimes \Gamma$) 
over $Q_M:=M/\Gamma$ (and $Q_X$) using 
techniques developed by Bressler, Nest, and Tsygan \cite{BNT}.

The main result of this paper is a Riemann-Roch formula for the Euler class of a good
$\Gamma$-equivariant  coherent $\cald_M$-module $\calm$ on $M$. We prove that (see Theorem \ref{thm:obfld-ss})
\[
{\rm eu}_{Q}(\calm)=\Big(\frac{1}{m}{\rm ch}_{Q}(\sigma_{\text{char}(\calm)}(\calm))\wedge {\rm eu}_{Q}(N)\wedge \pi_M^*Td(IQ_M)\Big)_{\dim(I Q_X)}.
\]
Hereby, $IQ_X$ (and $IQ_M$) is the inertia orbifold associated to the orbifold $Q_X$ (and $Q_M$); 
${\rm ch}_{Q}$ is the orbifold Chern character for the orbifold K-theory element $\sigma_{ch(\calm)}(\calm)$; ${\rm eu}_{Q}(N)$ is a characteristic class associated to the normal bundle $N$ of the local embedding
 $IQ\rightarrow Q$; $Td(IQ_M)$ is the Todd class of the orbifold bundle $TIQ_M$ over $IQ_M$; $\pi_M$ is the canonical projection from $IQ_X$ to $IQ_M$;  
and $m$ is the locally constant function on $IQ_X$ measuring the size of the isotropy group at each point. The proof generalizes the original idea of Bressler, Nest, and Tsygan \cite{BNT} along the developments in \cite{PPT} and \cite{PPT2}.

The paper is organized as follows. We start with fixing some basic notations in  Sec. \ref{sec:notations}. In Sec. \ref{sec:lefschetz}, we introduce the construction of the Hochschild Lefschetz class and orbifold Euler class for a good coherent
 $\cald_M$-module, and discuss their properties.  In Sec. \ref{sec:euler},
 we explain the computation of the orbifold Euler (Chern) class. \\

\noindent{\bf Acknowledgments:} We would like to thank Pierre Schapira and Giovanni Felder for interesting discussions. A.R. is supported by the Swiss National Science Foundation for the project ``Topological quantum mechanics and index theorems" (Ambizione Beitrag Nr.$\text{ PZ}00\text{P}2\_127427/1$). X.T. is partially supported by NSF grant DMS-0900985.

\section{Basic Notations}\label{sec:notations}
Throughout this paper, we closely follow the terminologies and conventions introduced in \cite{KS}.  Let $M$ be a complex manifold of complex dimension $d_M$. In this paper, dimension of a complex manifold/orbifold always refers to its complex dimension. Consider  $\cald_M$ the sheaf of holomorphic differential operators on $M$ and $\cald_{M^a}$ the sheaf of holomorphic differential operators on $M$ with the opposite algebra structure from $\cald_M$. A coherent $\cald_M$-module $\calm$ is called ``good" if in a neighborhood of any compact subset of $M$, $\calm$ admits a finite filtration $(\calm_k)$ by coherent $\cald_M$-submodules such that each quotient $\calm_k/\calm_{k-1}$ can be endowed with a good filtration (see \cite[Definition 4.24]{K}).  We denote by $D^b(\cald_M)$ the bounded derived category of $\cald_M$ modules, and $D^b_{\text{coh}}(\cald_M)$ the full triangulated subcategory of $D^b(\cald_M)$ consisting of objects with coherent cohomologies.  

Let $X=T^*M$ be the cotangent bundle with dimension $d_X=2d_M$ with the projection $\pi_M:X=T^*M\rightarrow M$. Denote by $\Omega^i_X$ the sheaf of holomorphic $i$-forms on $X$. On $X=T^*M$, consider the filtered sheaf  $\widehat{\cale}_{X}$ of $\complex$-algebras of formal microdifferential operators, and the subsheaf $\widehat{\cale}(0)_{X}$ of operators of order $\leq0$. Let $\pi^{-1}_M \cald_M$ be the pullback of $\cald_M$ on $X$.  Denote by $D^b(\widehat{\cale}_X)$ (and $D^b_{\text{coh}}(\widehat{\cale}_{X})$) the bound derived category of $\widehat{\cale}_X$-modules (and the subcategory of objects with coherent cohomologies.) There is a natural morphism $\pi^{-1}_M \cald_M\hookrightarrow \widehat{\cale}_{X}$ such that $\widehat{\cale}_X$ is flat over $\pi^{-1}_M \cald_M$. Given a coherent $\cald_M$-module $\calm$, $$\widehat{\calm}:=\widehat{\cale}_X\otimes _{\pi^{-1}_M \cald_M} \pi_M^{-1}\calm$$ defines a coherent $\widehat{\cale}_X$-module.  In this paper, we will mainly work with the $\widehat{\cale}_X$-module $\widehat{\calm}$ associated to $\calm$.  The support of $\widehat{\calm}$ is called the {\em characteristic variety} of $\calm$ and denoted by $\text{char}(\calm)$. Denote by $\widehat{\cale}_{X^a}$ the sheaf of formal microdifferential operators on $X=T^*M$ with the opposite algebra structure from $\widehat{\cale}_X$. 

Define $\omega:=\Omega^{d_X}_X[d_X]$, and define the duality functor $D'_{\widehat{\cale}_X}$ by
\[
D'_{\widehat{\cale}_X}(\calm):=R\calh om_{\widehat{\cale}_X}(\calm, \widehat{\cale}_X)\in D^b(\widehat{\cale}_{X^a}), \quad \text{for an }\widehat{\cale}_X \text{-module } \calm.
\] 

Let $\Gamma$ be a finite group acting on $M$ holomorphically and also on $X=T^*M$.
Note that for any $\gamma \in \Gamma$, one has a natural isomorphism 
$\gamma^{-1}\widehat{\cale}_X \rar \widehat{\cale}_X$ of sheaves of $\complex$-algebras 
on $X$. Hence, for  any $\gamma\in \Gamma$, any $\widehat{\cale}_X$-module $\calm$ has the natural structure of a $\gamma^{-1}\widehat{\cale}_X$-module. 
Consequently, the pushforward $\gamma_*\calm$ (in the category of sheaves of $\complex$-vector spaces on $X$) has the natural structure of a $\widehat{\cale}_X$-module.
 It is easy to verify that the (right derived functor of) $\gamma_*$ gives rise to a 
functor $\gamma_*$ from $D^b(\widehat{\cale}_X)$ (resp., $D^b_{\text{coh}}(\widehat{\cale}_X)$) to itself.

We denote by $\delta:X\rightarrow X\times X$ the diagonal embedding. For $\gamma\in \Gamma$, let 
$$\delta^\gamma_X: X\rightarrow X\times X,\ \delta^\gamma_X(x):=(\gamma(x),x)$$ 
be the graph of the action of $\gamma$.
  Let $\calc_X$  be the $\widehat{\cale}_{X\times X^a}$-module 
$\delta_{X,*}\widehat{\cale}_X$, and let $\calc_X^\gamma$ be the $\widehat{\cale}_{X\times 
X^a}$-module $\delta^\gamma_{X,*} \widehat{\cale}_X$.  
The sheaf of Hochschild homologies\footnote{This is a minor abuse of terminology.} $\calh\calh(\widehat{\cale}_X)$ (resp., 
 $\gamma$-twisted Hochschild homologies 
$\calh\calh(\widehat{\cale}_X, \widehat{\cale}^\gamma_X)$) is defined to be 
the object
\begin{equation}\label{eq:hoch-hom}
\calh\calh_X(\widehat{\cale}_X, \widehat{\cale}_X):=\delta_{X}^{-1}(\calc_{X^a}\stackrel{L}{\otimes}_{\widehat{\cale}_{X\times X^a}}\calc_X)
\quad (\text{resp.,} \quad \calh\calh(\widehat{\cale}_X, \widehat{\cale}_X^\gamma)
:={\delta_{X}}^{-1}(\calc_{X^a}\stackrel{L}{\otimes}_{\widehat{\cale}_{X\times X^a}}\calc^{\gamma}_X)\,\,\,)
\end{equation}
of the derived category of sheaves of $\complex$-vector spaces on $X$. 
 For any object $\calf$ in the derived category of sheaves of $\complex$-vector spaces on $X$, $H^{\bullet}(X;\calf)$
 shall denote the {\it hypercohomology} of $X$ with coefficients in $\calf$. 

The completed tensor products $\underline{\otimes}$, $\underline{\boxtimes}$, etc. have exactly the same meaning as in~\cite{KS}.
 
\section{Lefschetz class}\label{sec:lefschetz}
\subsection{Definition of Lefschetz class}
Let $\calm$ be a good coherent $\cald_M$ module on $M$. We apply the functor $$\calm\mapsto \widehat{\calm}:=\widehat{\cale}_{X}\otimes_{\pi^{-1}_M \cald_M}\pi_M^{-1}\calm$$ and work with the corresponding $\widehat{\cale}_X$-module $\widehat{\calm}$. 
Let $\gamma$ act on $M$ holomorphically.  Lift the action of $\gamma$ to an action on $X:=T^*M$.  
An element $u$ in $Hom_{\cald_M}(\calm, \gamma_*(\calm))$ naturally defines an element $\hat{u}$ in 
$Hom_{\widehat{\cale}_X}(\widehat{\calm}, \gamma_*(\widehat{\calm}))$.  
In what follows, we will introduce a Lefschetz class for $u$ by studying $\hat{u}$ in $Hom_{\widehat{\cale}_X}(\widehat{\calm}, \gamma_*(\widehat{\calm}))$. 
Our construction generalizes analogous constructions in \cite{KS}. 

\begin{lemma}\label{lemma:mor}
Let $\widehat{\calm} \in D^b_{\rm coh}(\widehat{\cale}_X)$. There is a natural morphism in $D^b_{\rm coh}(\widehat{\cale}_{X\times X^a})$:
\begin{eqnarray}
\label{eq:cotrace} 
\label{eq:trace}&\gamma_*(\widehat{\calm}) \stackrel{L}{\underline{\boxtimes}}D'_{\widehat{\cale}_X}(\widehat{\calm}) \rightarrow \calc_X^\gamma.
\end{eqnarray}
\end{lemma}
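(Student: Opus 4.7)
The plan is to construct the $\gamma$-twisted trace morphism by pushing forward the standard (untwisted) Kashiwara--Schapira trace along the automorphism $\gamma \times \id$ of $X \times X^a$. First I would recall from \cite{KS} that for any $\widehat{\calm} \in D^b_{\rm coh}(\widehat{\cale}_X)$, the evaluation/biduality construction produces a canonical morphism
$$\tau_{\widehat{\calm}}\,:\,\widehat{\calm} \stackrel{L}{\underline{\boxtimes}} D'_{\widehat{\cale}_X}(\widehat{\calm}) \longrightarrow \calc_X$$
in $D^b_{\rm coh}(\widehat{\cale}_{X \times X^a})$; stalkwise on the diagonal this is the evaluation pairing $m \otimes \phi \mapsto \phi(m)$.

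Next I would exploit the naturality of both sides under $\gamma \times \id$. The algebra isomorphism $(\gamma \times \id)^{-1}\widehat{\cale}_{X \times X^a} \cong \widehat{\cale}_{X \times X^a}$ induced by $\gamma^{-1}\widehat{\cale}_X \cong \widehat{\cale}_X$ makes $(\gamma \times \id)_*$ into an endofunctor of $D^b_{\rm coh}(\widehat{\cale}_{X \times X^a})$. Since $\delta^\gamma_X = (\gamma \times \id) \circ \delta_X$, we obtain a canonical identification $\calc_X^\gamma = \delta^\gamma_{X,*}\widehat{\cale}_X \cong (\gamma \times \id)_* \calc_X$. Similarly, because the external tensor product commutes with pushforward along a product map (and $\id_*$ is trivial on the second factor), $(\gamma \times \id)_*\bigl(\widehat{\calm} \stackrel{L}{\underline{\boxtimes}} D'_{\widehat{\cale}_X}(\widehat{\calm})\bigr) \cong \gamma_*\widehat{\calm} \stackrel{L}{\underline{\boxtimes}} D'_{\widehat{\cale}_X}(\widehat{\calm})$. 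Applying $(\gamma \times \id)_*$ to $\tau_{\widehat{\calm}}$ then yields the desired morphism \eqref{eq:trace}.

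The main technical point I expect to handle is verifying that the two $\widehat{\cale}_{X \times X^a}$-module structures one inherits via pushforward match the natural structures of $\gamma_*\widehat{\calm} \stackrel{L}{\underline{\boxtimes}} D'_{\widehat{\cale}_X}(\widehat{\calm})$ and $\calc_X^\gamma$ described in Section \ref{sec:notations}. This reduces to the observation that on either side the left action of $\widehat{\cale}_X$ coming from the first factor is obtained from the untwisted action by precomposition with the algebra isomorphism $\gamma^{*}:\widehat{\cale}_X \to \gamma_*\widehat{\cale}_X$, which is exactly how the $\widehat{\cale}_X$-module structure on $\gamma_{*}(\cdot)$ is defined. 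As a concrete sanity check I would unwind the construction locally: over an open $U \times V$, the morphism sends $m \otimes \phi$ (with $m \in \widehat{\calm}(\gamma^{-1}(U))$ and $\phi \in D'_{\widehat{\cale}_X}(\widehat{\calm})(V)$) to $\phi(m|_{\gamma^{-1}(U) \cap V}) \in \widehat{\cale}_X(\gamma^{-1}(U) \cap V) = \calc_X^\gamma(U \times V)$, and bilinearity over $\widehat{\cale}_X \,\underline{\boxtimes}\, \widehat{\cale}_{X^a}$ is then immediate from $\phi$ being left $\widehat{\cale}_X$-linear combined with $\gamma^{*}$ being an algebra homomorphism. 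Coherence of the target is preserved throughout because $\gamma \times \id$ is a diffeomorphism.
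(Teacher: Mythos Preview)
Your proposal is correct and follows exactly the same approach as the paper: take the untwisted trace morphism $\widehat{\calm} \stackrel{L}{\underline{\boxtimes}} D'_{\widehat{\cale}_X}(\widehat{\calm}) \to \calc_X$ from \cite[Lemma 4.1.1]{KS} and apply the functor $(\gamma \times 1)_*$. The paper records only these two steps, whereas you additionally spell out why $(\gamma \times 1)_*$ carries source and target to the required objects and why the module structures match, but the core argument is identical.
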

\begin{proof}
By \cite[Lemma 4.1.1]{KS}, there is a natural morphism 
\[
\widehat{\calm} \stackrel{L}{\underline{\boxtimes}}D'_{\widehat{\cale}_X}(\widehat{\calm}) \rightarrow \calc_X.
\]
Applying the functor $(\gamma \times 1)_*$ to the above morphism, we obtain the desired morphism 
\[
\gamma_*(\widehat{\calm}) \stackrel{L}{\underline{\boxtimes}}D'_{\widehat{\cale}_X}(\widehat{\calm}) \rightarrow \calc_X^\gamma.
\]
\end{proof}

Let $u\in Hom_{\widehat{\cale}_X}(\widehat{\calm}, \gamma_*(\widehat{\calm}))$. Consider the morphism
\[
\begin{split}
R\calh om_{\widehat{\cale}_X}(\widehat{\calm}, \gamma_*(\widehat{\calm}) )&\stackrel{\sim}{\leftarrow}D'_{\widehat{\cale}_X}(\widehat{\calm})\stackrel{L}{\otimes}_{\widehat{\cale}_X}\gamma_*(\widehat{\calm})\\
&\cong \calc _{X^a}\stackrel{L}{\otimes}_{\widehat{\cale}_{X\times X^a}}\big(\gamma_*(\widehat{\calm})\stackrel{L}{\underline{\boxtimes}}D'_{\widehat{\cale}}(\widehat{\calm})\big)\\
&\rightarrow \calc _{X^a}\stackrel{L}{\otimes}_{\widehat{\cale}_{X\times X^a}}\calc_{X}^\gamma\xrightarrow{\delta^{-1}_X}\calh\calh(\widehat{\cale}_X, \widehat{\cale}_X^\gamma).
\end{split}
\]
This defines a natural map
\begin{equation} \label{as3e1}
Hom_{\widehat{\cale}_X}(\widehat{\calm}, \gamma_*(\widehat{\calm}))\longrightarrow  H^0_{\text{supp}(\calm)}(X;\calh\calh(\widehat{\cale}_X, \widehat{\cale}^\gamma_X)).
\end{equation}
\begin{definition}\label{def:lefschetz}
For an element $u\in Hom_{\cald_M}(\calm, \gamma_*(\calm))$, define the Hochschild Lefschetz class $hh^\gamma(\calm, u)\in H^0_{\text{supp}(\calm)}(X;\calh\calh(\widehat{\cale}_X, \widehat{\cale}_X^\gamma))$ to be the image of $\hat{u}\in Hom_{\widehat{\cale}_X}(\widehat{\calm}, \gamma_*(\widehat{\calm}))$ under the morphism~\eqref{as3e1}.
\end{definition}

Recall that $\calh\calh(\widehat{\cale}_X,\widehat{\cale}_X^{\gamma})$ can be naturally identified with 
$RHom_{\widehat{\cale}_{X \times X^a}}(\omega_X^{\otimes-1}, \calc_X^{\gamma})$ using the duality functor $D'_{\widehat{\cale}_{X\times X^a}}$ and \cite[Theorem 2.5.7]{KS}. The following analog of~\cite[Lemma 4.1.4]{KS} holds. We leave its proof to the interested reader. 

\begin{lemma} \label{lks4.1.4}
Under the natural identification of $\calh\calh(\widehat{\cale}_X,\widehat{\cale}_X^{\gamma})$ with 
$RHom_{\widehat{\cale}_{X \times X^a}}(\omega_X^{\otimes-1}, \calc_X^{\gamma})$,  the Hochschild Lefschetz class 
$hh^\gamma(\calm, u)$ coincides with the following composite of morphisms 
\[ \omega_X^{\otimes-1} \xrightarrow{ \text{\cite[Lemma 4.1.1 (i)]{KS}}  }
\widehat{\calm} \stackrel{L}{\underline{\boxtimes}} D'_{\widehat{\cale}_X}(\widehat{\calm}) 
\xrightarrow{\hat{u} \boxtimes \text{id}}\gamma_*(\widehat{\calm}) \stackrel{L}{\underline{\boxtimes}} D'_{\widehat{\cale}_X}(\widehat{\calm}) 
\xrightarrow{\text{Lemma \ref{lemma:mor}}}\calc_X^{\gamma} \,\text{.}
\]
\end{lemma}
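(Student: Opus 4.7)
The plan is to mirror the proof of \cite[Lemma 4.1.4]{KS}, tracking the extra $\gamma$-twisting through each step. First I would unwind Definition~\ref{def:lefschetz}: by construction $hh^\gamma(\calm,u)$ is the image of $\hat u$ under the composition
\[
\Hom_{\widehat{\cale}_X}(\widehat{\calm},\gamma_*\widehat{\calm})\cong D'_{\widehat{\cale}_X}(\widehat{\calm})\stackrel{L}{\otimes}_{\widehat{\cale}_X}\gamma_*\widehat{\calm}\cong \calc_{X^a}\stackrel{L}{\otimes}_{\widehat{\cale}_{X\times X^a}}\bigl(\gamma_*\widehat{\calm}\stackrel{L}{\underline{\boxtimes}}D'_{\widehat{\cale}_X}\widehat{\calm}\bigr),
\]
followed by the arrow induced by Lemma~\ref{lemma:mor} and then by $\delta_X^{-1}$. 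The image of $\hat u$ in the first isomorphism is literally $\text{id}_{D'\widehat{\calm}}\otimes\hat u$, which in turn corresponds, after the exterior-interior product identification, to $\hat u\boxtimes\text{id}$ applied to the canonical element representing the identity of $\widehat{\calm}$.

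Next I would pass to the $RHom$ description. By \cite[Theorem 2.5.7]{KS} applied to the duality functor $D'_{\widehat{\cale}_{X\times X^a}}$ one has a natural isomorphism
\[
\calc_{X^a}\stackrel{L}{\otimes}_{\widehat{\cale}_{X\times X^a}}\calg\;\cong\;R\Hom_{\widehat{\cale}_{X\times X^a}}\bigl(\omega_X^{\otimes-1},\calg\bigr)
\]
for $\calg$ coherent over $\widehat{\cale}_{X\times X^a}$, and after applying $\delta_X^{-1}$ to $\calg=\calc_X^\gamma$ the right-hand side becomes exactly $\calh\calh(\widehat{\cale}_X,\widehat{\cale}_X^\gamma)$ under the identification stated in the lemma. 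Under this $RHom$ description, a morphism $\omega_X^{\otimes-1}\to\calc_X^\gamma$ factors canonically through any object $\calg$ equipped with a map $\calg\to\calc_X^\gamma$; in particular, choosing $\calg=\gamma_*\widehat{\calm}\stackrel{L}{\underline{\boxtimes}}D'_{\widehat{\cale}_X}\widehat{\calm}$ with the map supplied by Lemma~\ref{lemma:mor}, the image of $\hat u$ in the tensor formulation corresponds precisely to the morphism $\omega_X^{\otimes-1}\to \gamma_*\widehat{\calm}\stackrel{L}{\underline{\boxtimes}}D'_{\widehat{\cale}_X}\widehat{\calm}$ that one obtains by applying $\hat u\boxtimes\text{id}$ to the morphism $\omega_X^{\otimes-1}\to\widehat{\calm}\stackrel{L}{\underline{\boxtimes}}D'_{\widehat{\cale}_X}\widehat{\calm}$ of \cite[Lemma 4.1.1(i)]{KS}.

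Putting these two translations together yields the factorization in the statement: the class $hh^\gamma(\calm,u)$, viewed as a morphism $\omega_X^{\otimes-1}\to\calc_X^\gamma$, is the composite
\[
\omega_X^{\otimes-1}\longrightarrow \widehat{\calm}\stackrel{L}{\underline{\boxtimes}}D'_{\widehat{\cale}_X}\widehat{\calm}\xrightarrow{\hat u\boxtimes\text{id}}\gamma_*\widehat{\calm}\stackrel{L}{\underline{\boxtimes}}D'_{\widehat{\cale}_X}\widehat{\calm}\longrightarrow \calc_X^\gamma.
\]

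The main obstacle I expect is bookkeeping: one has to verify that the isomorphism $\calc_{X^a}\otimes(\gamma_*\widehat{\calm}\boxtimes D'\widehat{\calm})\cong D'\widehat{\calm}\otimes\gamma_*\widehat{\calm}$ used in Definition~\ref{def:lefschetz} is compatible with the $(\gamma\times 1)_*$--pushforward that was used to produce the arrow of Lemma~\ref{lemma:mor} out of \cite[Lemma 4.1.1]{KS}, i.e., that the $\gamma$-twist on the $\calc_X^\gamma$ side matches the $\gamma_*$ on the module side. Once the naturality of \cite[Theorem 2.5.7]{KS} with respect to the automorphism $\gamma\times 1$ of $X\times X^a$ is used to intertwine these two $\gamma$-structures, the rest is the same diagram chase as in \cite[Lemma 4.1.4]{KS}, and the identification of the two descriptions of $hh^\gamma(\calm,u)$ follows.
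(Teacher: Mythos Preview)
Your proposal is correct and follows exactly the approach the paper intends: the paper does not give a proof, stating only that the lemma is an analog of \cite[Lemma 4.1.4]{KS} and leaving the verification to the reader. Your plan of mirroring that argument while tracking the $\gamma$-twist via the naturality of \cite[Theorem 2.5.7]{KS} under $(\gamma\times 1)_*$ is precisely what is being asked for.
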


Let $X^\gamma$ be the submanifold\footnote{$X^\gamma$ is a disjoint union of embedded submanifolds possibly of different dimensions.} of $X$ consisting of $\gamma$-fixed points and let $\iota:X^\gamma \hookrightarrow X$ be the inclusion. Let $\Omega^\bullet_{X^\gamma}$ be the (smooth) de Rham complex on $X^\gamma$ (viewed 
as a complex of sheaves on $X^{\gamma}$). As in \cite[Section 3]{PPT2}, one can construct a (distinguished) quasi-isomorphism  with the quasi-isomorphism $$\calh\calh(\widehat{\cale}_X, \widehat{\cale}_X^\gamma) \rightarrow \Omega^{(\text{dim}(X^{\gamma}))-\bullet}$$ following a construction in \cite[Section 4]{FFS} and \cite[Section 2]{EF} (see also~\cite{FT}). Therefore, 
\begin{proposition}\label{prop:gamma-hoch} The Hochschild homology $\calh\calh(\widehat{\cale}_X, \widehat{\cale}_X^\gamma)$  is quasi-isomorphic (via a distinguished quasi-isomorphism) to $\iota_!(\complex_{X^\gamma}[\dim(X^\gamma)])$.
 \end{proposition}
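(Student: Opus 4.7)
The plan is to follow the construction outlined in \cite{PPT2}, which adapts the Feigin-Felder-Shoikhet map \cite{FFS} and its Engeli-Felder refinement \cite{EF} from the untwisted setting to the $\gamma$-twisted setting. The argument proceeds in three stages: localize to a neighborhood of $X^\gamma$, compute the twisted Hochschild complex in a Fedosov-Weyl local model, and then apply the Poincar\'e lemma.

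First I would verify that $\calh\calh(\widehat{\cale}_X, \widehat{\cale}_X^\gamma)$ is supported on $X^\gamma$. The bimodules $\calc_{X^a}$ and $\calc_X^\gamma$ are supported on the diagonal $\delta_X(X)$ and on the graph $\delta_X^\gamma(X)$ in $X \times X^a$ respectively; these subsets meet exactly along $\delta_X(X^\gamma)$, so the derived tensor product $\calc_{X^a} \stackrel{L}{\otimes}_{\widehat{\cale}_{X \times X^a}} \calc_X^\gamma$ is supported there and $\delta_X^{-1}$ of it is supported on $X^\gamma$. In particular, it suffices to construct a distinguished quasi-isomorphism locally in a neighborhood of an arbitrary point $p \in X^\gamma$.

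Next I would pass to a local model. Near $p$, an equivariant Darboux chart identifies $\widehat{\cale}_X$ with a formal Weyl algebra and splits local symplectic coordinates into those tangent to $X^\gamma$ (on which $\gamma$ acts trivially) and normal to $X^\gamma$ (on which $\gamma - 1$ is invertible). In this local model, $\calc_X^\gamma$ becomes the $\gamma$-twisted diagonal bimodule, and the computation of $\calh\calh(\widehat{\cale}_X, \widehat{\cale}_X^\gamma)$ reduces to the twisted Hochschild homology of a Weyl algebra. By the FFS/EF computation, an explicit Hochschild-Kostant-Rosenberg-type chain map produces a quasi-isomorphism with the de Rham complex $\Omega^{\dim(X^\gamma)-\bullet}_{X^\gamma}$: the tangential directions contribute the full de Rham complex, while the normal Koszul directions are acyclic (because $\gamma - 1$ is invertible there) and absorb the expected degree shift. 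Applying the holomorphic/smooth Poincar\'e lemma then gives $\Omega^{\dim(X^\gamma)-\bullet}_{X^\gamma} \simeq \complex_{X^\gamma}[\dim(X^\gamma)]$, and since $\iota$ is a closed embedding so $\iota_! = \iota_*$, pushing forward yields the claim.

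The main obstacle will be the globalization: the FFS/EF chain map is defined in a local Darboux chart, and one must show that the resulting quasi-isomorphism is canonical and glues to a morphism in the derived category of sheaves on $X$. This amounts to verifying invariance under change of Darboux coordinates and equivariance under the centralizer of $\gamma$, which is precisely what the Fedosov-resolution machinery of \cite{PPT2} (building on \cite{FT}) is designed to provide. A secondary but genuine technical point is that $X^\gamma$ is a disjoint union of embedded submanifolds of possibly different dimensions, so $\dim(X^\gamma)$ must be treated as a locally constant function and the quasi-isomorphism must be assembled component by component.
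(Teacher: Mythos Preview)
Your proposal is correct and follows essentially the same approach as the paper: the paper simply asserts that the distinguished quasi-isomorphism $\calh\calh(\widehat{\cale}_X, \widehat{\cale}_X^\gamma) \rightarrow \Omega^{\dim(X^{\gamma})-\bullet}_{X^\gamma}$ is constructed as in \cite[Section 3]{PPT2} via the trace-density maps of \cite[Section 4]{FFS} and \cite[Section 2]{EF} (see also \cite{FT}), and your sketch unpacks precisely this route (support reduction to $X^\gamma$, local Weyl-algebra computation with tangential/normal splitting, Fedosov globalization). If anything, your outline is more detailed than the paper's own treatment, which amounts to a citation.
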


Thus one can define the {\em (microlocal) Lefschetz class} $ \mu \text{eu}^\gamma(\calm, u)\in H^{\dim{X^\gamma}}_{\text{supp}(\calm)^\gamma}(X^\gamma, \complex)$ of $u$ to be the image of $hh^\gamma(\calm, u)$ under the (distinguished) quasi-isomorphism 
$$
\calh\calh(\widehat{\cale}_X, \widehat{\cale}_X^\gamma)\rightarrow 
\iota_!(\complex_{X^\gamma}[\dim(X^\gamma)]).
$$ 
When $M$ is a point, $\gamma$ acts on $M$ trivially. Then $ \mu \text{eu}(\calm, u)$ is equal to the trace of $u$ as an endomorphism of $\calm$. 

\begin{remark}
One also has a class $\text{eu}(\calm, u) \in H^{2\dim{M^{\gamma}}}(M^{\gamma}, \complex)$. Its construction is completely analogous to that of $ \mu \text{eu}^\gamma(\calm, u)$. When $\gamma=\text{Id}$ and when $\calm = \cald_M \otimes_{\mathcal O_M} \cale$ for some holomorphic vector bundle $\cale$ on $M$, then $ \text{eu}(\calm, u)$ is equal to the trace of $u$ as an endomorphism of $\mathcal O_M \otimes^L_{\cald_M}\calm$ (see~\cite{EF,FT,R}).   
\end{remark}

\subsection{Composition of Hochschild Lefschetz classes}

Consider three complex manifolds $M_i,\ i=1,2,3$, and $X_i=T^*M_i,\ i=1,2,3$. Let $\widehat{\cale}_{X_1\times X_2^a}$ and $\widehat{\cale}_{X_2\times X_3^a}$ be the sheaf of formal microdifferential operators on $X_i\times X_{i+1}^a,\ i=1,2$.  Assume that the group $\Gamma$ acts on $M_i$ and $X_i$ holomorphically. Let $p_{ij}$ be the canonical projection from $X_1\times X_2\times X_3$ to $X_i\times X_j$ for $1\leq i<j\leq 3$.
 Also, let $d_i,\ i=1,2,3$ denote the complex dimensions of the $X_i$. 
In this subsection, as in~\cite{KS}, 
we implicitly identify $X=T^*M$ with its image in $X \times X$ under the embedding 
$\delta^{\gamma}_X$ whenever required. Also\footnote{As did in~\cite{KS}.}, to simplify notations, 
we sometimes denote $\widehat{\cale}_{X_i}$ by $\widehat{\cale}_i$: for 
example, $\otimes_{22^a}$ actually stands for $\otimes_{\widehat{\cale}_{X_2 \times X_2^a}}$.  

\begin{proposition}\label{prop:morphism}There is a natural morphism 
\[
\circ: Rp_{13!}\big(p^{-1}_{12}\calh\calh(\widehat{\cale}_{X_1\times X_2^a}, \widehat{\cale}_{X_1\times X_2^a}^\gamma)\stackrel{L}{\otimes} p^{-1}_{23}\calh\calh(\widehat{\cale}_{X_2\times X_3^a}, \widehat{\cale}_{X_2\times X_3^a}^{\gamma})\big)\longrightarrow \calh\calh(\widehat{\cale}_{X_1\times X_3^a}, \widehat{\cale}_{X_1\times X_3^a}^{\gamma}).
\]
\end{proposition}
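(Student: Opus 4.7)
The plan is to extend the composition of Hochschild classes from~\cite{KS} to the $\gamma$-twisted setting, replacing the relevant diagonals by graphs of $\gamma$ and verifying that the convolution of kernels remains compatible with the twist.

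First, I would unfold definition~(\ref{eq:hoch-hom}) applied to each product. Writing $Y_{ij} := X_i \times X_j^a$ to lighten notation, one has
\[
\calh\calh(\widehat{\cale}_{Y_{ij}}, \widehat{\cale}_{Y_{ij}}^\gamma) \;=\; \delta_{Y_{ij}}^{-1}\bigl(\calc_{Y_{ij}^a} \stackrel{L}{\otimes}_{\widehat{\cale}_{Y_{ij} \times Y_{ij}^a}} \calc_{Y_{ij}}^\gamma\bigr).
\]
The goal is then to construct a morphism between such complexes using the canonical convolution of kernels
\[
\calk_{12} \circ \calk_{23} \;:=\; Rp_{13!}\bigl(p_{12}^{-1}\calk_{12} \stackrel{L}{\otimes}_{p_2^{-1}\widehat{\cale}_{X_2}} p_{23}^{-1}\calk_{23}\bigr),
\]
which sends a $\widehat{\cale}_{Y_{12}}$-module and a $\widehat{\cale}_{Y_{23}}$-module to a $\widehat{\cale}_{Y_{13}}$-module, together with its variant over the larger ambient algebras $\widehat{\cale}_{Y_{ij} \times Y_{ij}^a}$.

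Second, I would exhibit the crucial isomorphisms at the level of kernels. For the untwisted modules, the composition of correspondences from~\cite[Section 4]{KS} supplies a natural identification $\calc_{Y_{12}} \circ \calc_{Y_{23}} \simeq \calc_{Y_{13}}$. The analogous identification $\calc_{Y_{12}}^{\gamma} \circ \calc_{Y_{23}}^{\gamma} \simeq \calc_{Y_{13}}^{\gamma}$ for the $\gamma$-twisted kernels follows from the same formal arguments together with the geometric observation that, since $\gamma$ acts compatibly on $X_1$, $X_2$, and $X_3$, the composition along $X_2$ of the graph of $\gamma$ on $Y_{12}$ with the graph of $\gamma$ on $Y_{23}$ coincides with the graph of $\gamma$ on $Y_{13}$. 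Using associativity of the derived (completed) tensor product, the projection formula, and base change for the proper map $p_{13}$, the convolution can then be commuted past all the operations in the formula for $\calh\calh$, yielding the desired morphism $\circ$.

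The main technical obstacle will be the careful handling of the completed tensor products and the base-change identities for $Rp_{13!}$ against the microlocal structure sheaves, in particular verifying that convolution commutes with restriction to the appropriate diagonals $\delta_{Y_{ij}}^{-1}$. This is where the equality ``composition of graphs of $\gamma$ equals the graph of $\gamma$'' is used essentially: it is the only piece of genuinely new input beyond the untwisted construction, ensuring that the twist on the middle factor indexed by $Y_{22^a}$ actually matches up to produce the twist on $Y_{13}$. Once this compatibility is in place, associativity and functoriality assemble the morphism $\circ$ canonically in $D^b$.
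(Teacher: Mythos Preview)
Your proposal identifies the right geometric input --- that graphs of $\gamma$ compose to graphs of $\gamma$ --- but it skips the technical device that actually makes the composition go through. The paper does \emph{not} argue via direct convolution identities of the shape $\calc_{Y_{12}}^{\gamma} \circ \calc_{Y_{23}}^{\gamma} \simeq \calc_{Y_{13}}^{\gamma}$. Instead, it first passes, using the duality functor $D'$ and~\cite[Theorem~2.5.7]{KS}, to the identification
\[
\calh\calh(\widehat{\cale}_{X_i\times X_j^a},\widehat{\cale}^\gamma_{X_i\times X_j^a}) \;\cong\; R\calh om_{\widehat{\cale}_{Z_i\times Z_j^a}}(S_{ij}, K^\gamma_{ij}),
\]
with $S_{ij}=\omega_{X_i}^{\otimes -1}\stackrel{L}{\underline{\boxtimes}}\calc_{X_j^a}$ and $K^\gamma_{ij}=\calc^\gamma_{X_i}\stackrel{L}{\underline{\boxtimes}}\omega^\gamma_{X_j^a}$. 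This separation of source and target is the point: the source objects admit a morphism $S_{13} \to Rp_{13*}(S_{12}\stackrel{L}{\underline{\otimes}}_{Z_2} S_{23})$ taken verbatim from the untwisted~\cite[Proposition~4.2.1]{KS}, while the target objects admit a morphism $Rp_{13!}(K^\gamma_{12}\stackrel{L}{\underline{\otimes}}_{Z_2} K^\gamma_{23}) \to K^\gamma_{13}$ coming from the twisted trace $\omega^\gamma_{X_2^a}\stackrel{L}{\underline{\otimes}}_{Z_2}\calc^\gamma_{X_2}\to\delta^\gamma_*\complex_{X_2}[2d_2]$ (the $(\gamma\times 1)_*$-pushforward of~\cite[Theorem~2.5.7]{KS}). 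These two arrows point in opposite directions, and it is precisely the $R\calh om$ formalism that combines them into a single morphism on Hochschild homology.

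Your direct approach asks, in effect, that $Rp_{13!}$ of a tensor product of two objects of the form $\calc_{Y^a}\stackrel{L}{\otimes}\calc_Y^\gamma$ map to a single such object on $Y_{13}$. But the two factors behave oppositely under push-pull (one wants $Rp_{13*}$, the other $Rp_{13!}$), and no base-change or projection formula lets you ``commute convolution past'' this tensor product without first dualizing one side. Nor does~\cite[Section~4]{KS} supply the convolution identity $\calc_{Y_{12}} \circ \calc_{Y_{23}} \simeq \calc_{Y_{13}}$ in the form you invoke; what it supplies is exactly the $R\calh om$ reformulation and the $S$-side morphism above. So the gap is concrete: you need the dualization step (equation~\eqref{as3e2} in the paper) before any convolution argument can be run, and after that the twisted input reduces to the single trace map~\eqref{as3e3}--\eqref{as3e4} rather than to a kernel isomorphism.
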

\begin{proof} Following \cite{KS}, we will denote by $\widehat{\cale}_{Z_i}$ the complex manifold $\widehat{\cale}_{X_i\times X_{i}^a}$, and identify the Hochschild homology $\calh\calh(\widehat{\cale}_{X_i\times X_j^a}, \widehat{\cale}^\gamma_{X_i\times X_j^a})$ as follows:
\begin{eqnarray*}
\calh\calh(\widehat{\cale}_{X_i\times X_j^a}, \widehat{\cale}^\gamma_{X_i\times X_j^a}) &&\cong\big(\calc_{X_i^a}\stackrel{L}{\underline{\boxtimes}}\calc_{X_j}\big)\stackrel{L}{\otimes }_{\widehat{\cale}_{Z_i\times Z_j^a}}\big(\calc^\gamma_{X_i}\stackrel{L}{\underline{\boxtimes}}\calc_{X_j^a}^\gamma\big)\\
&&\cong RHom_{\widehat{\cale}_{Z_i\times Z_j^a}}\big(\omega_{X_i}^{\otimes -1}\stackrel{L}{\underline{\boxtimes}}\omega_{X^a_{j}}^{\otimes -1}, \calc^\gamma_{X_i}\stackrel{L}{\underline{\boxtimes}}\calc^\gamma_{X_j^a}\big)\\
&&\cong RHom_{\widehat{\cale}_{Z_i\times Z_j^a}}\big((\omega_{X_i}^{\otimes -1}\stackrel{L}{\underline{\boxtimes}}\omega_{X^a_{j}}^{\otimes -1})\stackrel{L}{\underline{\otimes}}_{\widehat{\cale}_{X^a_j}}\omega_{X_j^a}, (\calc^\gamma_{X_i}\stackrel{L}{\underline{\boxtimes}}\calc^\gamma_{X_j^a})\stackrel{L}{\underline{\otimes}}_{\widehat{\cale}_{X_j^a}}\omega_{X^a_j}\big)\\
&&\cong RHom_{\widehat{\cale}_{Z_i\times Z_j^a}}\big(\omega_{X_i}^{\otimes -1}\stackrel{L}{\underline{\boxtimes}}\calc_{X^a_{j}}, \calc^\gamma_{X_i}\stackrel{L}{\underline{\boxtimes}}\omega^\gamma_{X_j^a}\big).
\end{eqnarray*}

As in~\cite{KS}, let $S_{ij}:=\omega_{X_i}^{\otimes -1}\stackrel{L}{\underline{\boxtimes}}\calc_{X^a_{j}}$, and let 
$K^\gamma_{ij}:=\calc^\gamma_{X_i}\stackrel{L}{\underline{\boxtimes}}\omega^\gamma_{X_j^a}$. 
The above computation can be summarized as 
\begin{equation} \label{as3e2}
\calh\calh(\widehat{\cale}_{X_i\times X_j^a},\widehat{\cale}^\gamma_{X_i\times X_j^a})\cong RHom_{\widehat{\cale}_{Z_i\times Z_j^a}}(S_{ij}, K^\gamma_{ij}).
\end{equation}

We obtain the morphism 
\begin{equation}  \label{as3e3}
K^\gamma_{12}\stackrel{L}{\underline{\otimes}}_{\widehat{\cale}_{Z_2}}K^{\gamma}_{23} \stackrel{\cong}{\longrightarrow} 
(\calc^\gamma_{X_1}\stackrel{L}{\underline{\boxtimes}}\omega^{\gamma}_{X_2^a})\stackrel{L}{\underline{\otimes}}_{\widehat{\cale}_{Z_2}}(\calc^{\gamma}_{X_2}\stackrel{L}{\underline{\boxtimes}}\omega_{X_3^a}^{\gamma})\longrightarrow p_{13}^{-1}(\calc^{\gamma}_{X_1}\stackrel{L}{\underline{\boxtimes}}\omega^{\gamma}_{X_3^a})[2d_2]=p^{-1}_{13}(K_{13}^{\gamma})[2d_2].
\end{equation}
For the last arrow in the above composition, note that $\omega^\gamma_{X_2^a}\stackrel{L}{\underline{\otimes}}_{\widehat{\cale}_{Z_2}}\calc^\gamma_{X_2}$ is 
naturally isomorphic to $(\gamma \times 1)_*(\omega_{X_2^a}\stackrel{L}{\underline{\otimes}}_{\widehat{\cale}_{Z_2}}\calc_{X_2})$. Also 
recall that the morphism $\omega_{X_2^a}\stackrel{L}{\underline{\otimes}}_{\widehat{\cale}_{Z_2}}\calc_{X_2} \rightarrow \delta_*{\mathbb C}_{X_2}[2d_2]$ 
is defined by~\cite[Theorem 2.5.7]{KS}. Hence, one obtains a morphism $\omega^\gamma_{X_2^a}\stackrel{L}{\underline{\otimes}}_{\widehat{\cale}_{Z_2}}\calc^\gamma_{X_2}  
\rar \delta^{\gamma}_*\complex_{X_2}[2d_2]$ which induces the last arrow in the above composition. 
The morphism~\eqref{as3e3} induces, by adjunction, a morphism 

\begin{equation} \label{as3e4}
Rp_{13!}(K^{\gamma}_{12} \stackrel{L}{\underline{\otimes}}_{\cale_{Z_2}} K^{\gamma}_{23}) \rightarrow K^{\gamma}_{13} \,\text{.} 
\end{equation}
As explained in the proof of \cite[Proposition 4.2.1]{KS}, there is a natural morphism 
\[
S_{13}\longrightarrow Rp_{13*}(S_{12}\stackrel{L}{\underline{\otimes}}_{\widehat{\cale}_{Z_2}}S_{23}).
\]
With the above two morphisms, we have natural morphisms
\begin{eqnarray*}
&&Rp_{13!}(p^{-1}_{12}\calh\calh(\widehat{\cale}_{X_1\times X_2^a}, \widehat{\cale}_{X_1\times X_2^a}^\gamma)\stackrel{L}{\otimes}p^{-1}_{23}\calh\calh(\widehat{\cale}_{X_2\times X_3^a}, \widehat{\cale}_{X_2\times X_3^a}^\gamma))\\
&&\longrightarrow Rp_{13!}RHom_{\widehat{\cale}_{Z_1\times Z_3^a}}\big(S_{12}\stackrel{L}{\underline{\otimes}}_{\widehat{\cale}_{Z_2}}S_{23}, K_{12}^\gamma\stackrel{L}{\underline{\otimes}}_{\widehat{\cale}_{Z_2}}K_{23}^{\gamma}\big)\\
&& \longrightarrow RHom_{\widehat{\cale}_{Z_1\times Z_3^a}}\big(Rp_{13*}(S_{12}\stackrel{L}{\underline{\otimes}}_{\widehat{\cale}_{Z_2}}S_{23}), Rp_{13!}(K_{12}^\gamma\stackrel{L}{\underline{\otimes}}_{\widehat{\cale}_{Z_2}}K_{23}^{\gamma})\big)\\
&&\longrightarrow RHom_{\widehat{\cale}_{Z_1\times Z_3^a}}(S_{13}, K_{13}^{\gamma})\cong \calh\calh(\widehat{\cale}_{X_1\times X_3^a}, \widehat{\cale}_{X_1\times X_3^a}^{\gamma}).
\end{eqnarray*}
This proves the desired proposition.
\end{proof}

As a corollary, if $X_1=X_3= \text{pt}$ and $X_2=X$, then Proposition \ref{prop:morphism} defines a morphism 
\[
Ra_!\big(\calh\calh(\widehat{\cale}_X, \widehat{\cale}_X^\gamma)\stackrel{L}{\otimes} \calh\calh(\widehat{\cale}_X, \widehat{\cale}_{X}^{\gamma}) \big)\rightarrow \complex_{pt},
\]
where $a:X\to pt$ is the natural map. 
By the adjunction formula, we have 
\begin{corollary}
\label{cor:pairing}Let $X_\reals$ be the underlying real manifold of $X$. There is a canonical morphism $\calh\calh(\widehat{\cale}_X, \widehat{\cale}_X^\gamma)\stackrel{L}{\otimes} \calh\calh(\widehat{\cale}_X, \widehat{\cale}_{X}^{\gamma}) \big)\rightarrow \omega^{top}_{X_\reals}$, where $\omega^{top}_{X_\reals}$ is the topological dualizing complex on $X_\reals$ with coefficients in $\complex$.
\end{corollary}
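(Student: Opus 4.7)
The plan is to derive this as a direct consequence of Proposition~\ref{prop:morphism} applied to the degenerate case $X_1 = X_3 = \text{pt}$, $X_2 = X$, followed by the standard Poincar\'e--Verdier adjunction between $Ra_!$ and $a^!$ for the structure map $a : X \to \text{pt}$.

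First, I would unpack what Proposition~\ref{prop:morphism} reduces to under these choices. When $X_1 = X_3 = \text{pt}$, the triple product $X_1 \times X_2 \times X_3$ collapses to $X$, the projections $p_{12}$ and $p_{23}$ become (identifications with) the identity on $X$, and $p_{13}$ becomes $a : X \to \text{pt}$. Moreover $\widehat{\cale}_{\text{pt} \times X^a} \cong \widehat{\cale}_{X^a}$, and Hochschild homology is invariant under passing to the opposite algebra, so
\[
\calh\calh(\widehat{\cale}_{\text{pt} \times X^a}, \widehat{\cale}^\gamma_{\text{pt} \times X^a}) \;\cong\; \calh\calh(\widehat{\cale}_X, \widehat{\cale}_X^\gamma),
\]
and similarly for the $(23)$-factor. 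Finally, $\calh\calh(\widehat{\cale}_{\text{pt}}, \widehat{\cale}_{\text{pt}}^\gamma) \cong \complex_{\text{pt}}$ since $\widehat{\cale}_{\text{pt}} = \complex$ and the $\gamma$-action is trivial. Threading these identifications through Proposition~\ref{prop:morphism} produces the morphism
\[
Ra_!\bigl(\calh\calh(\widehat{\cale}_X, \widehat{\cale}_X^\gamma) \stackrel{L}{\otimes} \calh\calh(\widehat{\cale}_X, \widehat{\cale}_X^\gamma)\bigr) \;\longrightarrow\; \complex_{\text{pt}},
\]
which is precisely the displayed map preceding the corollary.

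Next, I would invoke the adjunction $\Hom(Ra_! \calf, \calg) \cong \Hom(\calf, a^! \calg)$ in the derived category of sheaves of $\complex$-vector spaces on $X_{\reals}$ (the underlying real manifold of the complex manifold $X$, since the Hochschild homology complexes are complexes of sheaves of $\complex$-vector spaces and the adjunction is of topological, not holomorphic, nature). Taking $\calg = \complex_{\text{pt}}$ and recalling that by definition $a^! \complex_{\text{pt}} = \omega^{top}_{X_\reals}$ is the topological dualizing complex of $X_\reals$ with $\complex$-coefficients, the above morphism transposes to the desired map
\[
\calh\calh(\widehat{\cale}_X, \widehat{\cale}_X^\gamma) \stackrel{L}{\otimes} \calh\calh(\widehat{\cale}_X, \widehat{\cale}_X^\gamma) \;\longrightarrow\; \omega^{top}_{X_\reals}.
\]

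There is no substantive obstacle in this argument; the entire content of the corollary is a bookkeeping translation followed by one application of Verdier duality. The only point requiring any care is the canonical identification of Hochschild homology with that of the opposite-algebra sheaf and the verification that the identifications $p_{12}, p_{23} \cong \text{id}$ are compatible with the construction in Proposition~\ref{prop:morphism}; both are routine and so I would not belabor them in the proof.
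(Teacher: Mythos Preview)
Your argument is correct and follows exactly the paper's own approach: specialize Proposition~\ref{prop:morphism} to $X_1=X_3=\text{pt}$, $X_2=X$ to obtain $Ra_!\bigl(\calh\calh(\widehat{\cale}_X,\widehat{\cale}_X^\gamma)\stackrel{L}{\otimes}\calh\calh(\widehat{\cale}_X,\widehat{\cale}_X^\gamma)\bigr)\to\complex_{\text{pt}}$, then apply the $(Ra_!,a^!)$-adjunction with $a^!\complex_{\text{pt}}=\omega^{top}_{X_\reals}$. Your additional unpacking of the identifications (opposite algebra, degenerate projections) is more explicit than the paper's one-line treatment but adds no new content.
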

\begin{remark}Let $HH_{\bullet}(\widehat{\cale}_X, \widehat{\cale}^{\gamma}_X)$ denote the hypercohomology 
${H}^{-\bullet}(X,\calh\calh(\widehat{\cale}_X,\widehat{\cale}^{\gamma}_X))$. 
 We remark that by integration, Corollary \ref{cor:pairing} defines a pairing on $HH_\bullet(\widehat{\cale}_X, \widehat{\cale}_X^\gamma)$, which is a $\gamma$-equivariant generalization of the Fourier-Mukai pairing. 
We hope to discuss more about this pairing in a future publication.  
\end{remark}

We recall that \cite[Definition 3.1.3]{KS} that for $\calk_i\in D^b(\widehat{\cale}_{X_i\times X_{i+1}^a})$ ($i=1,2$), 
\begin{eqnarray*}
\calk_1\circ_{X_2} \calk_2&=&Rp_{13!}(\calk_1\stackrel{L}{\underline{\otimes}_{\widehat{\cale}_2}}\calk_2)\in D^b(\widehat{\cale}_{X_1\times X_3^a}),\\
\calk_1\ast_{X_2} \calk_2&=&Rp_{13*}(\calk_1\stackrel{L}{\underline{\otimes}_{\widehat{\cale}_2}}\calk_2)\in D^b(\widehat{\cale}_{X_1\times X_3^a}).
\end{eqnarray*}
In what follows, we often simplify notations by writing $\circ_2$ for $\circ_{X_2}$ and $\ast_2$ for $\ast_{X_2}$.

We have the following generalization of \cite[Lemma 4.3.3]{KS}.
\begin{lemma}\label{lem:twist-dual}
Let $\gamma\in \Gamma$. Let $\calk$ be a $\gamma$-equivariant element in $D^b_{\text{coh}}(\widehat{\cale}_{X_1\times X_2^a})$. There is a natural morphism in $D^b(\widehat{\cale}_{X_1\times X_1^a})$,
\[
\calk\circ_2\omega^\gamma \circ_2 D'_{\widehat{\cale}}(\calk) \longrightarrow \calc^\gamma_{X_1}.
\]
\end{lemma}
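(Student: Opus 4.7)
The plan is to adapt the proof of \cite[Lemma 4.3.3]{KS} to the $\gamma$-twisted setting by incorporating the $\gamma$-equivariant structure on $\calk$ at the appropriate point. Recall that \cite[Lemma 4.3.3]{KS} supplies an untwisted natural morphism
\[
\calk \circ_2 \omega_{X_2} \circ_2 D'_{\widehat{\cale}}(\calk) \longrightarrow \calc_{X_1}
\]
in $D^b(\widehat{\cale}_{X_1 \times X_1^a})$. The desired twisted morphism should be obtained by ``sliding'' the $\gamma$-action from the equivariance structure of $\calk$ into the middle factor $\omega_{X_2}$ and the target $\calc_{X_1}$, converting them to $\omega^\gamma_{X_2}$ and $\calc^\gamma_{X_1}$ respectively. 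In spirit this is entirely parallel to the proof of Lemma~\ref{lemma:mor} above, where a single application of $(\gamma \times 1)_*$ to the untwisted KS morphism produced the $\gamma$-twisted version.

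Concretely, I would proceed in three steps. First, apply the functor $(\gamma \times 1_{X_1^a})_*$, acting on the outer $X_1$-coordinate of $X_1 \times X_1^a$, to the untwisted morphism above. By definition of $\calc^\gamma_{X_1}$ in Section \ref{sec:notations}, the right-hand side becomes $\calc^\gamma_{X_1}$. The left-hand side, since pushforward along $\gamma \times 1_{X_1^a}$ commutes with convolution over the independent factor $X_2$, becomes $\bigl((\gamma \times 1_{X_2^a})_* \calk\bigr) \circ_2 \omega_{X_2} \circ_2 D'(\calk)$. Second, use the $\gamma$-equivariance of $\calk$ together with a ``sliding'' isomorphism to identify
\[
\bigl((\gamma \times 1_{X_2^a})_* \calk\bigr) \circ_2 \omega_{X_2} \,\cong\, \calk \circ_2 \bigl((\gamma \times 1)_* \omega_{X_2}\bigr) \,=\, \calk \circ_2 \omega^\gamma_{X_2},
\]
the point being that $\gamma$ acting on the second $X_2$-coordinate of $\calk$ can equivalently be transferred to the first $X_2$-coordinate of $\omega_{X_2}$ after convolving over $X_2$. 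Third, assemble the two isomorphisms and the untwisted morphism into the desired composition $\calk \circ_2 \omega^\gamma_{X_2} \circ_2 D'(\calk) \to \calc^\gamma_{X_1}$. An equivalent, perhaps cleaner, route is to apply Lemma \ref{lemma:mor} directly to $\calk$ viewed as a coherent $\widehat{\cale}_{X_1 \times X_2^a}$-module, and then integrate out the $X_2$-factor using the $\gamma$-twisted trace morphism $\omega^\gamma_{X_2^a} \stackrel{L}{\underline{\otimes}}_{\widehat{\cale}_{Z_2}} \calc^\gamma_{X_2} \to \delta^\gamma_*\complex_{X_2}[2d_2]$ constructed in the proof of Proposition \ref{prop:morphism}.

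The main obstacle will be the careful bookkeeping of how the $\gamma$-twist interacts with the convolution product $\circ_2$ (which involves the pushforward along $p_{13}$) and with the duality functor $D'_{\widehat{\cale}}$. In particular, verifying that the sliding isomorphism is natural in $\calk$ and is compatible with the equivariance data is the technical heart of the argument. However, these manipulations follow the same pattern as in \cite{KS} for the untwisted case and as in Proposition \ref{prop:morphism} above, so no essentially new ideas should be required, and the reader accustomed to the conventions of \cite{KS} should be able to supply the details without difficulty.
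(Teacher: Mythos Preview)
Your proposal is correct, and the alternative route you sketch at the end --- apply Lemma~\ref{lemma:mor} to $\calk$ viewed as an object of $D^b_{\rm coh}(\widehat{\cale}_{X_1 \times X_2^a})$, precompose with the equivariance morphism $\calk \to \gamma_*(\calk)$, tensor with $\omega^\gamma_{X_2}$ over $\widehat{\cale}_{22^a}$, and then collapse the $X_2$-factor via the twisted trace $\calc^\gamma_{X_2^a} \stackrel{L}{\underline{\otimes}}_{22^a} \omega^\gamma_{X_2} \to \delta^\gamma_*\complex_{X_2}[2d_2]$ followed by adjunction --- is exactly the paper's proof. Your primary route (applying $(\gamma \times 1_{X_1^a})_*$ to the untwisted \cite[Lemma~4.3.3]{KS} and sliding the twist across the convolution using equivariance) is also valid but requires the bookkeeping you anticipate; the paper's route sidesteps this by inserting the $\gamma$-twist at the level of the constituent kernels \emph{before} assembling the convolution, which makes the role of the equivariance hypothesis completely transparent --- it appears as the single arrow $\calk \to \gamma_*(\calk)$.
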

\begin{proof}
By Lemma~\ref{lemma:mor}, we have a morphism in $D^b(\widehat{\cale}_{X_1\times X_2^a\times X_1^a\times X_2})$ 
\[
\gamma_*(\calk) \underline{\boxtimes}D'_{\widehat{\cale}}(\calk) \longrightarrow \calc^{\gamma}_{X_1\times X_2^a}.
\]
Applying the functor $(-) \stackrel{L}{\otimes}_{X_2\times X_2^a} \omega_2^\gamma$, we obtain
\begin{eqnarray*}
&&\big(\calk \stackrel{L}{\underline{\boxtimes}}D'_{\widehat{\cale}}(\calk)\big)\otimes_{X_2\times X_2^a} \omega^\gamma_{X_2}\stackrel{(1)}{\longrightarrow}  \big( \gamma_*(\calk) \stackrel{L}{\underline{\boxtimes}} D'_{\widehat{\cale}}(\calk)\big)\otimes_{X_2\times X_2^a} \omega^\gamma_{X_2}\\
&&\stackrel{(2)}{\longrightarrow} \calc_{X_1\times X_2^a}^\gamma \stackrel{L}{\underline{\otimes}}_{X_2\times X_2^a}\omega^\gamma_{X_2}\stackrel{(3)}{\longrightarrow}  \calc^\gamma_{X_1}\stackrel{L}{\underline{\otimes}}\complex_{X_2}[2\dim(X_2)],
\end{eqnarray*}
Here, in arrow (1), we use the assumption that $\calk$ is $\gamma$-equivariant, i.e. $\gamma$ is a natural element in $Hom(\calk, \gamma_*(\calk))$;  in arrow (2), we have used the morphism in Lemma \ref{lemma:mor}; in arrow (3), we have used the natural isomorphism between 
$\calc^{\gamma}_{X_2^a} \stackrel{L}{\underline{\otimes}}_{X_2 \times X_2^a} \omega^{\gamma}_{X_2}$ and $\delta^{\gamma}_*\complex_{X_2}[2d_2]$; this morphism is obtained by applying the functor $(\gamma \times 1)_*$ 
to the morphism from~\cite[Theorem 2.5.7]{KS}. 
The desired morphism is induced by the above composition of morphisms via adjunction. 
\end{proof}

For $\Lambda$ a closed subset of $X$, let 
$$ HH_{\Lambda}(\widehat{\cale}_X,\widehat{\cale}_X^{\gamma}):= H^0(R\Gamma_{\Lambda}(X; \calh \calh(\widehat{\cale}_X,\widehat{\cale}_X^{\gamma}))) \text{.}$$
Let $\Lambda_{12}$ and $\Lambda_2$ be closed subsets of $X_1\times X_2^a$ and $X_2$. Define $\Lambda_{12}\times_{X_2}\Lambda_2\subset X_1\times X_2$ to be the fiber product of $\Lambda_{12}$ and $\Lambda_2$ over $X_2$, and $\Lambda_{12}\circ \Lambda$ to be $p_{1}(\Lambda_{12}\times_{X_2}\Lambda_2)\subset X_1$. Given a $\gamma$-equivariant kernel $\calk\in D^b_{\text{coh}}(\widehat{\cale}_{X_1\times X_2^a})$ with support $\Lambda_{12}$,   we define the following map 
\[
\Phi_\calk: HH_{\Lambda_2}(\widehat{\cale}_{X_2}, \widehat{\cale}_{X_2}^\gamma)\rightarrow HH_{\Lambda_{12}\circ \Lambda_2}(\widehat{\cale}_{X_1},\widehat{\cale}_{X_1}^\gamma)
\]
via a sequence of compositions,
\begin{eqnarray*}
&&HH_{\Lambda_2}(\widehat{\cale}_{X_2}; \widehat{\cale}^\gamma_{X_2})\cong H^0(R\Gamma_{\Lambda_2}Hom_{X_2\times X_2^a}(\omega_{2}^{\otimes -1}, \calc_2^\gamma))\\
&&\longrightarrow H^0\big(R\Gamma_{\Lambda_{12}\times_{X_2}\Lambda_2}Hom_{X_1\times X_1^a}(\calk\stackrel{L}{\underline{\otimes}_2}\omega^{\otimes -1}_2\circ_2\omega_2\circ_2 D'_{\widehat{\cale}}\calk, \calk\stackrel{L}{\underline{\otimes}_2}\calc^\gamma_2\circ_2\omega_2\circ_2 D'_{\widehat{\cale}}\calk)\big)\\
&&\longrightarrow H^0\big(R\Gamma_{\Lambda_{12}\circ \Lambda_2} Hom_{X_1\times X_1^a}(Rp_*(\calk\stackrel{L}{\underline{\otimes}_2}\omega^{\otimes -1}_2\circ_2\omega_2\circ_2 D'_{\widehat{\cale}}\calk), Rp_{!}(\calk\stackrel{L}{\underline{\otimes}_2}\calc^\gamma_2\circ_2\omega_2\circ_2 D'_{\widehat{\cale}}\calk))\big)\\
&&\cong H^0(\Gamma_{\Lambda_{12}\circ \Lambda_2}Hom_{X_1\times X_1^a}(\calk\ast_2 D'_{\widehat{\cale}}\calk, \calk\circ_2\omega_2^\gamma\circ_2 D'_{\widehat{\cale}}(\calk)))\\
&&\longrightarrow H^0(R\Gamma_{\Lambda_{12}\circ \Lambda_2}Hom_{X_1\times X_1^a}(\omega^{\otimes -1}, \calc_1^\gamma))\cong HH_{\Lambda_{12}\circ \Lambda_2}(\widehat{\cale}_{X_1}; \widehat{\cale}_{X_1}^\gamma),
\end{eqnarray*}
where in the first arrow, we have applied the functor  $\call\mapsto \calk\stackrel{L}{\underline{\otimes}_2}(\call\circ_2 \omega_2\circ_2 D'_{\widehat{\cale}}\calk)$, and in the last arrow we have used Lemma \ref{lem:twist-dual}, and \cite[Lemma 4.3.3]{KS}. 

Let $f:X_2\to X_1$ be a $\gamma$-equivariant  symplectic map. The graph $\Gamma_f$ of $f$ in $X_1\times X_2^a$ is  a Lagrangian submanifold. Denote by $\calb_{\Gamma_f}$ the holonomic $D$-module supported at $\Gamma_f$. It is easy to check from the property of $f$ that $\calb_{\Gamma_f}$ is $\gamma$-equivariant. By Definition \ref{def:lefschetz}, we can define $hh(f,\gamma)=hh^{\gamma}(\calb_{\Gamma_f},\gamma)\in H^0_{\Gamma_f}(X_1\times X_2; \calh\calh(\widehat{\cale}_{X_1\times X_2^a}, \widehat{\cale}_{X_1\times X_2^a}^\gamma))$. 

The proof of \cite[Lemma 4.3.4]{KS} may be generalized word for word to give the following result. 
\begin{proposition}
\label{prop:push forward} The following morphisms are equal,
$$
\Phi_{\calb_{\Gamma_f}}=hh(f, \gamma)\circ : HH_{\Lambda_2}(\widehat{\cale}_{X_2}, \widehat{\cale}_{X_2}^\gamma)\to HH_{\Lambda_{12} \circ \Lambda_2}(\widehat{\cale}_{X_1}, \widehat{\cale}_{X_1}^\gamma).
$$
\end{proposition}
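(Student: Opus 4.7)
The plan is to verify the identity by tracing both morphisms through the definitions and matching them via a single natural commutative diagram, exactly as in \cite[Lemma 4.3.4]{KS} but with every arrow now intertwined with the $\gamma$-action. Using the identification \eqref{as3e2}, I would represent a class in $HH_{\Lambda_2}(\widehat{\cale}_{X_2},\widehat{\cale}_{X_2}^\gamma)$ by a morphism $\phi\colon \omega_2^{\otimes -1}\to \calc_2^\gamma$ supported on $\Lambda_2$, so that both $\Phi_{\calb_{\Gamma_f}}(\phi)$ and $hh(f,\gamma)\circ\phi$ become morphisms $\omega_1^{\otimes -1}\to \calc_1^\gamma$ supported on $\Gamma_f\circ\Lambda_2$, which are then compared as elements of $HH_{\Gamma_f\circ\Lambda_2}(\widehat{\cale}_{X_1},\widehat{\cale}_{X_1}^\gamma)$.

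Second, I would unpack $hh(f,\gamma)$ via Lemma \ref{lks4.1.4} as the composite
\[
\omega_{X_1\times X_2^a}^{\otimes -1}\longrightarrow \calb_{\Gamma_f}\stackrel{L}{\underline{\boxtimes}}D'_{\widehat{\cale}}(\calb_{\Gamma_f})\longrightarrow (\gamma\times 1)_*\calb_{\Gamma_f}\stackrel{L}{\underline{\boxtimes}}D'_{\widehat{\cale}}(\calb_{\Gamma_f})\longrightarrow \calc_{X_1\times X_2^a}^\gamma,
\]
whose middle arrow encodes the $\gamma$-equivariance of $\calb_{\Gamma_f}$ (guaranteed by the $\gamma$-equivariance of $f$) and whose last arrow is the one supplied by Lemma \ref{lemma:mor}. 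Composing this class with $\phi$ by means of Proposition \ref{prop:morphism} and comparing term-by-term with the sequence of arrows in the definition of $\Phi_{\calb_{\Gamma_f}}$, I expect both sides to factor through the common composite
\[
\omega_1^{\otimes -1}\longrightarrow \calb_{\Gamma_f}\circ_2 \omega_2^{\otimes -1}\circ_2 D'_{\widehat{\cale}}(\calb_{\Gamma_f})\xrightarrow{1\circ\phi\circ 1}\calb_{\Gamma_f}\circ_2 \calc_2^\gamma\circ_2 D'_{\widehat{\cale}}(\calb_{\Gamma_f})\longrightarrow \calb_{\Gamma_f}\circ_2 \omega_2^\gamma\circ_2 D'_{\widehat{\cale}}(\calb_{\Gamma_f})\longrightarrow \calc_1^\gamma,
\]
where the first arrow uses \cite[Lemma 4.1.1]{KS}, the third uses the morphism $\calc_2^\gamma\to \omega_2^\gamma[2d_2]$ obtained by applying $(\gamma\times 1)_*$ to \cite[Theorem 2.5.7]{KS}, and the last arrow is the one furnished by Lemma \ref{lem:twist-dual}.

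The crucial compatibility that closes the diagram is that the quasi-isomorphism $\omega_{X_2^a}^\gamma\stackrel{L}{\underline{\otimes}}_{\widehat{\cale}_{Z_2}}\calc_{X_2}^\gamma\simeq \delta^\gamma_*\complex_{X_2}[2d_2]$ appearing in the proof of Lemma \ref{lem:twist-dual} is literally the one used in the last step of \eqref{as3e3} while building the composition morphism of Proposition \ref{prop:morphism}. Because both arise by applying $(\gamma\times 1)_*$ to the same morphism of \cite[Theorem 2.5.7]{KS}, all triangles in the comparison diagram commute for formal reasons, precisely mirroring the untwisted argument in \cite[Lemma 4.3.4]{KS}. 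The main obstacle is purely bookkeeping: one must carefully track where the $\gamma$-equivariance of $\calb_{\Gamma_f}$ enters each arrow, ensuring the twist factors $(\gamma\times 1)_*$ absorb consistently at each associativity step; once this is done, the rest is a formal diagram chase that follows the KS argument \emph{verbatim}.
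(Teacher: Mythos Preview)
Your proposal is correct and follows essentially the same approach as the paper: both represent the input class as a morphism $\omega_2^{\otimes -1}\to\calc_2^\gamma$, unpack $hh(f,\gamma)$ via Lemma~\ref{lks4.1.4}, and reduce the comparison to a single commutative diagram obtained from the one in \cite[Lemma 4.3.4]{KS} by replacing $\otimes_{22^a}\calc_2$ with $\otimes_{22^a}\calc_2^\gamma$ and tracking the $(\gamma\times 1)_*$-twists. The paper likewise singles out, as you do, the fact that the morphism $\omega_{X_2^a}^\gamma\stackrel{L}{\underline{\otimes}}_{\widehat{\cale}_{Z_2}}\calc_{X_2}^\gamma\to\delta_*^\gamma\complex_{X_2}[2d_2]$ used in Lemma~\ref{lem:twist-dual} and in Proposition~\ref{prop:morphism} is the same one, obtained by applying $(\gamma\times 1)_*$ to \cite[Theorem~2.5.7]{KS}.
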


\begin{proof}
Let $\alpha_2$ be a class in $HH(\widehat{\cale}_{X_2}, \widehat{\cale}^\gamma_{X_2})$. By the isomorphism 
\[
\calh\calh(\widehat{\cale}_X, \widehat{\cale}_X^\gamma)\cong \delta^{-1}_X R\calh om_{\widehat{\cale}_{X\times X^a}}(D'_{\widehat{\cale}_{X^a\times X}} (\calc_{X^a}), \calc^\gamma_X)\cong \delta^{-1}_X R\calh om_{\widehat{\cale}_{X\times X^a}}(\omega_X^{\otimes -1}, \calc^\gamma_X),
\]
we can regard $\alpha_2$ as a morphism $\alpha_2: \omega^{\otimes -1}_{X_2}\longrightarrow \calc^\gamma_{X_2}$ in the derived category of sheaves of $\complex$-vector spaces on $X_{22^a}:=X_2\times X_2^a$. Similarly, we can regard the element $\alpha=hh(f,\gamma)$ in $HH(\widehat{\cale}_{X_1\times X_2^a}, \widehat{\cale}_{X_1\times X_2^a}^\gamma)$ as a morphism $\alpha:\omega^{\otimes -1}_{X_1\times X_2^a}\to \calc^\gamma_{X_1\times X_2^a}$ in the derived category of sheaves on $X_{11^a22^a}:=X_1\times X_1^a\times X_2\times X_2^a$. By Lemma~\ref{lks4.1.4}, $\alpha$ is given by a composite of morphisms 
$$\omega_{12^a}^{\otimes -1} \xrightarrow{\text{\cite[Lemma 4.1.1(i)]{KS}}} \calk \stackrel{L}{\underline{\boxtimes}} D'\calk \stackrel{\bar{\alpha}}{\longrightarrow} \calc^{\gamma}_{12^a}$$
in the derived category of sheaves on $X_{11^a22^a}$, where $\calb_{\Gamma_f}$ is denoted by $\calk$. The element $\Phi_{\calb_{\Gamma_f}}(\alpha)$ is an element represented by the morphism 
\[
\begin{split}
&\omega_{1}^{\otimes -1}\to \calk \ast_2 D'_{\widehat{\cale}} \calk\to Rp_{1*}\big( \calk \stackrel{L}{\underline{\otimes}}_2 (\omega^{\otimes -1}_2 \circ_2 \omega_2\circ_2 D'_{\widehat{\cale}}\calk)\big)\\
& \stackrel{\alpha_2}{\longrightarrow}Rp_{1!}\big(\calk \stackrel{L}{\underline{\otimes}_2}(\calc_2^\gamma \circ_2 \omega_2\circ_2 D'_{\widehat{\cale}}\calk)\big)\to Rp_{1!}\big(\calk \circ_2 \omega^\gamma \circ D'_{\widehat{\cale}}\calk\big)\xrightarrow{\text{Lemma \ref{lem:twist-dual}}}\calc^\gamma_{X_1}
\end{split}
\]
in the derived category of sheaves on $X_1$. The following commutative diagram in the category $D^b(\widehat{\cale}_{11^a}\boxtimes \complex_{X_2\times X_2^a})$ directly generalizes a subdiagram of a diagram appearing in the one in the proof of \cite[Lemma 4.3]{KS} (see \cite[Page 111]{KS}). The only genuine change in following diagram from the one in the proof of \cite[Lemma 4.3]{KS} is to change $\otimes_{22^a}\calc_2$ to $\otimes_{22^a}\calc^\gamma_2$. We also point out to the reader that the last row in the diagram below is written in a different (though equivalent) way than the corresponding row in \cite[Page 111]{KS} (modulo the above mentioned change from $\otimes_{22^a}\calc_2$ to $\otimes_{22^a}\calc^\gamma_2$).  
{\small
\[
\begin{diagram}
\node{p^{-1}_{11^a}\omega^{\otimes -1}_1}\arrow{s,t, }{}\\
\node{\big(\omega^{\otimes -1}_1\underline{\boxtimes}\calc_{2^a}\big)\stackrel{L}{\underline{\otimes}}_{22^a}\omega_2^{\otimes-1}}\arrow{s,t,}{}\arrow{e,t,t}{\alpha_2}\node{\big( \omega^{\otimes -1}_1\underline{\boxtimes} \calc_{2^a}\big)\stackrel{L}{\underline{\otimes}}_{22^a}\calc_2^\gamma}\arrow{s,t,}{}\arrow{e,t,}{}\node{\big((\calk\underline{\boxtimes}D'\calk)\circ_{2^a}\omega_{2^a}\big)\stackrel{L}{\underline{\otimes}}_{22^a}\calc_2^\gamma}\arrow{s,t,l}{\bar{\alpha}}\\
\node{(\calk \underline{\boxtimes} D'_{\widehat{\cale}}\calk) \stackrel{L}{\underline{\otimes}}_{22^a} (\omega_2^{\otimes -1} \circ_2 \omega_2)}\arrow{e,t,t}{\alpha_2}\node{(\calk \underline{\boxtimes} D'_{\widehat{\cale}}\calk) \stackrel{L}{\underline{\otimes}}_{22^a} (\calc_2^{\gamma} \circ_2 \omega_2)}\arrow{e,t,t}{\text{Lemma \ref{lem:twist-dual}}}\node{\calc_1^\gamma\boxtimes \complex_{X_2}[2\text{dim}(X_2)]}
\end{diagram}
\]
}
By adjunction, the map from $p^{-1}_{11^a}\omega_1^{\otimes -1}$ to $\calc_1^\gamma\boxtimes \complex_{X_2}[2\text{dim}(X_2)]$ via the composition of the upper row with the right column is $\alpha\circ \alpha_2$ while the map via the composition of the left column with the lower row is
 $\Phi_{\calb_{\Gamma_f}}(\alpha_2)$. This gives the desired equality of morphisms. 
\end{proof}

\begin{remark}
 It is interesting to compare Proposition~\ref{prop:push forward} with~\cite[Theorem 5.4]{G}, which is the direct image theorem for the Lefschetz class constructed in~\cite{G}. The integral transform $\Phi_{\calb_{\Gamma_f}}$ in Proposition~\ref{prop:push forward} 
corresponds to an honest morphism $f:X_1 \rightarrow X_2$ of complex manifolds in~\cite{G}. On the other hand, the holomorphic diffeomorphisms 
$\gamma_{X_1}$ and $\gamma_{X_2}$ that appear\footnote{$\gamma_{X_1}$ (resp., $\gamma_{X_2}$) denotes the holomorphic diffeomorphism 
$\gamma$ acting on $X$ (resp., $X_2$)} in  Proposition~\ref{prop:push forward} correspond to to a pair of integral transforms, one on $X_1$ and the other on $X_2$ satisfying certain compatibility criteria with respect to $f$. 

It would be interesting to generalize the material in this and the previous subsection (Proposition~\ref{prop:push forward} in particular)
 to the case when $\gamma$ acts on $X_1$ as well as $X_2$ 
by integral transforms rather than holomorphic diffeomorphisms. The approach here seriously utilizes the fact that 
$\gamma$ acts by holomorphic diffeomorphisms, making such a generalization non-trivial. Such a generalization would yield a more general 
direct image theorem for the Hochschild Lefschetz class than~\cite[Theorem 5.4]{G}. Further, when combined with an 
understanding of trace densities, such a result would yield a (possibly simpler) approach to generalizations of 
~\cite[Theorem 5.4]{G} itself.  
\end{remark}

\subsection{Orbifold Hochschild and Chern class}  \label{subsec:class} Let $Q_X$ (and $Q_M$) be the orbifold defined by the quotient $X/\Gamma$ (and $M/\Gamma$) for $X=T^*M$ and let 
 $\mathfrak{q}:X\to Q_X$ be the canonical quotient map. Define a sheaf of algebras $\cala$ on $Q_X$ by 
\[
\cala(U):= \widehat{\cale}_X(\mathfrak{q}^{-1}(U))\rtimes \Gamma, 
\]
for any (sufficiently small) open subset $U\subset Q_X.$ In the above definition, $\Gamma$ acts on $\mathfrak{q}^{-1}(U)$, and therefore acts on the algebra $\widehat{\cale}_X(\mathfrak{q}^{-1}(U))$. $ \widehat{\cale}_X(\mathfrak{q}^{-1}(U))\rtimes \Gamma$ is the associated crossed product algebra.  

Let $\calm$ be a good $\Gamma$-equivariant coherent $\cald_M$-module and $\widehat{\calm}$  the corresponding $\Gamma$-equivariant $\widehat{\cale}_X$-module. Define $\mathfrak{M}$ to be a sheaf on $Q_X$ by 
\[
\mathfrak{M}(U):=\widehat{\calm}(\mathfrak{q}^{-1}(U)), 
\]
for any (sufficiently small) open subset $U\subset Q_X.$  On an open subset $U\subset Q_X$, both $\gamma$ and $ \widehat{\cale}_X(\mathfrak{q}^{-1}(U))$ naturally act on $\widehat{\calm}(\mathfrak{q}^{-1}(U))$ with the appropriate commutation relation between these actions. This equips $\mathfrak{M}$ with a natural $\cala$-module structure.
It is not difficult to check that if $\calm$ is a good coherent $\cald_M$-module, 
$\mathfrak{M}$ is a good coherent $\cala$-module.  
We apply the following theorem to construct the Hochschild class 
$hh^\cala_{\text{supp}(\calm)/\Gamma,i}(\mathfrak{M})$ and the cyclic
 class $ch^\cala_{\text{supp}(\calm)/\Gamma, i}(\mathfrak{M})$ of a perfect 
complex $\mathfrak{M}$ of $\cala$-modules, where $\text{supp}(\calm)/\Gamma$ is the 
support of $\mathfrak{M}$ in $Q_X$.

\begin{theorem}\label{thm:bnt}(\cite[Theorem 2.1.1.]{BNT})
Let $Q$ be a topological space and $Z$ a closed subset of $Q$. Let $\cala$ be a sheaf of algebras on $Q$ such that there is a global section 
$1\in \Gamma(Q; \cala)$ which restricts to $1_{\cala_x}$ for all $x\in Q$. 
Let $\calh\calc^-(\cala)$ (resp., $\calh\calh(\cala)$) be  the sheaf of negative cyclic (resp., Hochschild) homologies\footnote{As in Section~\ref{sec:notations}, we abuse terminology here:  $\calh\calc^-(\cala)$ and $\calh\calh(\cala)$ are objects in the derived category of sheaves of $\complex$-vector spaces on $Q$. Also, when $Q=X:=T^*M$ as in Section~\ref{sec:notations} and when $\cala=\widehat{\cale}_X$, $\calh\calh(\cala)$ as defined in~\cite{BNT} is isomorphic to $\calh\calh(\cala)$ as defined in Section~\ref{sec:notations}.} of $\cala$. Denote by $K^i_Z(\cala)$ the $i$-th K-group of the category of perfect complexes of $\cala$-modules which are acyclic outside $Z$. There exists the cyclic class ${\rm ch}^\cala_{Z,i}:K^i_Z(\cala)\to H^{-i}_Z(Q; \calh\calc^- (\cala))$ and the Hochschild class $hh^\cala_{Z,i}:K^i_Z(\cala)\to H^{-i}_Z(Q; \calh\calh(\cala))$ such that 
\begin{itemize}
\item the composition 
\[
K^i_Z(\cala)\xrightarrow{{\rm ch}^\cala_{Z,i}}H^{-i}_Z(Q; \calh\calc^-(\cala))\longrightarrow H^{-i}_Z(Q; \calh\calh(\cala))
\]
coincides with $hh^\cala_{Z,i}$;
\item for a perfect complex $\calf^\bullet$ of $\cala$-modules supported on $Z$ the Hochschild class 
$$hh^{\cala}_{Z,0}(\calf^\bullet)\in H^0_Z(Q; \calh\calh(\cala))$$ 
coincides with the composition 
\[
k\xrightarrow{1\mapsto id} \calr \calh om_\cala(\calf^\bullet, \calf^\bullet)\stackrel{\cong}{\longleftarrow} (\calr\calh om_\cala(\calf^\bullet, \cala)\boxtimes \calf^\bullet)\otimes^L_{\cala\otimes \cala^{op}}\cala\xrightarrow{ev\otimes id}\cala\otimes^L_{\cala\otimes \cala^{op}}\cala.
\]
\end{itemize}
\end{theorem}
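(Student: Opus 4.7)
The plan is to follow the Bressler--Nest--Tsygan strategy: construct the trace locally from projective modules, extend to perfect complexes by alternating sums, and globalize by sheafifying presheaves of spectra.

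First, I would work locally on a sufficiently small open $U\subset Q$ over which a perfect complex acyclic outside $Z$ may be replaced by a bounded complex $P^\bullet$ of finitely generated projective $\cala(U)$-modules supported, at the level of resolutions, over $Z \cap U$. For each $P^i \cong e\,\cala(U)^{n_i}$ the classical trace gives $\text{End}_{\cala(U)}(P^i)\to HH_0(\cala(U))$, $\phi\mapsto \text{tr}(e\phi e)$, and by Loday's formula this lifts through the $B$-operator to a negative cyclic class in $HC^-_0(\cala(U))$. The alternating sum $\sum_i (-1)^i \text{tr}(\mathrm{id}_{P^i})$ is then the desired local Hochschild (resp.\ cyclic) class; standard homological algebra shows it is invariant under chain homotopy, additive on short exact sequences, and vanishes on complexes acyclic on $U$, so it factors through $K^0_{Z\cap U}(\cala|_U)$. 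For $i>0$, I would use the Waldhausen $S_\bullet$-construction applied to the exact category $\text{Perf}_{Z\cap U}(\cala|_U)$ and promote the trace to a map of spectra into the cyclic homology spectrum of $\cala(U)$, whose $\pi_i$ gives the higher local classes.

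Second, I would globalize. Since all of the above is natural in $U$, the local trace maps assemble into a morphism of presheaves of spectra on $Q$, and sheafification yields a morphism of complexes of sheaves from the K-theory sheaf into $\calh\calh(\cala)$ (resp.\ $\calh\calc^-(\cala)$). Applying $R\Gamma_Z(Q;-)$ and passing to cohomology produces the maps $hh^\cala_{Z,i}$ and $\text{ch}^\cala_{Z,i}$ with values in $H^{-i}_Z(Q; \calh\calh(\cala))$ and $H^{-i}_Z(Q;\calh\calc^-(\cala))$. The first bullet is built into the construction, since the negative cyclic trace projects onto the Hochschild trace under the canonical map $\calh\calc^-(\cala)\to \calh\calh(\cala)$. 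The second bullet reduces to the observation that the local alternating-sum formula, under the natural identification $\calr\calh om_\cala(\calf^\bullet,\calf^\bullet)\cong (\calr\calh om_\cala(\calf^\bullet,\cala)\boxtimes \calf^\bullet)\otimes^L_{\cala\otimes \cala^{op}}\cala$, agrees with the displayed evaluation composite $k\to \calr\calh om_\cala(\calf^\bullet,\calf^\bullet)\to \cala\otimes^L_{\cala\otimes \cala^{op}}\cala$.

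The main technical obstacle is the descent step: the trace must be functorial at the chain level, not merely up to quasi-isomorphism, so that sheafification respects the support condition and assembles into a genuine object of the derived category of sheaves. As in BNT, I would handle this by realizing the trace as an explicit functor from a chain-level model of $\text{Perf}_Z(\cala|_U)$ into a mixed-complex model of Hochschild/cyclic homology, which makes the \v{C}ech descent and the condition of being acyclic outside $Z$ automatic. A secondary subtlety is the lift from $HH$ to $HC^-$, for which one invokes Connes' explicit $B$-operator on the normalized bar complex to ensure that the resulting cycles live in the negative cyclic complex, thereby producing the refined class $\text{ch}^\cala_{Z,i}$ and not merely $hh^\cala_{Z,i}$.
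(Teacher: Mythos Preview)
The paper does not prove this statement at all: Theorem~\ref{thm:bnt} is quoted verbatim from \cite[Theorem~2.1.1]{BNT} and used as a black box to define the orbifold Hochschild and cyclic classes in Section~\ref{subsec:class}. There is no proof in the paper to compare your proposal against.

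Your sketch is a reasonable outline of how the Bressler--Nest--Tsygan argument proceeds, and it is honest about the main technical issues (chain-level functoriality of the trace, descent, support conditions). One point to be careful about: the lift from the Hochschild trace to the negative cyclic class is not obtained simply by ``lifting through the $B$-operator'' on a single projective module. In \cite{BNT} the cyclic Chern character is constructed via Goodwillie--Jones type machinery (or equivalently via the trace on the category of perfect complexes landing in the cyclic nerve), and the compatibility with the Hochschild class is then a formal consequence of the map $\calh\calc^{-}(\cala)\to\calh\calh(\cala)$. Your description of the alternating sum as the local formula for $hh^{\cala}_{Z,0}$ and its identification with the displayed composite $k\to\calr\calh om_{\cala}(\calf^\bullet,\calf^\bullet)\to\cala\otimes^L_{\cala\otimes\cala^{op}}\cala$ is correct and is exactly the content of the second bullet. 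If your intent was only to match the paper, the appropriate ``proof'' here is a single sentence citing \cite{BNT}.
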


Applying Theorem \ref{thm:bnt}, for a good coherent $\Gamma$-equivariant  $\cald_M$-module $\calm$, we have a well defined Hochschild class $hh^\cala_{Z, 0}(\mathfrak{M})\in H^{0}_Z(Q_X; \calh\calh(\cala))$ and cyclic class ${\rm ch}^\cala_{Z,0}(\mathfrak{M})\in H^{0}_Z(Q_X; \calh\calc^-(\cala))$ where $Z=\text{supp}(\calm)/\Gamma$ and $\cala$ is the sheaf of crossed product algebras defined by $\cala(U):=\widehat{\cale}_X(\mathfrak{q}^{-1}(U))\rtimes \Gamma$ (for sufficiently smal open sets $U$ in $Q_X$).  

On $Q_X$, we can also consider the sheaf of algebras $\widehat{\cale}_{Q_X}$ defined by $$\widehat{\cale}_{Q_X}(U):=\widehat{\cale}_X(\mathfrak{q}^{-1}(U))^\Gamma \quad \text{(for $U$ sufficiently small)}.$$ 
Here, $\widehat{\cale}_X(\mathfrak{q}^{-1}(U))^\Gamma$ is the space of $\Gamma$-invariant sections of $\widehat{\cale}_X(\mathfrak{q}^{-1}(U))$. Similarly, we consider the good coherent $\widehat{\cale}_{Q_X}$-module $\widehat{\calm}_{Q_X}$ defined by $$\widehat{\calm}_{Q_X}(U):=\widehat{\calm}(\mathfrak{q}^{-1}(U))^\Gamma.$$ 
Applying Theorem \ref{thm:bnt} to $\widehat{\cale}_{Q_X}$ and $\widehat{\calm}_{Q_X}$, we obtain 
the Hochschild and cyclic classes $$hh^{\widehat{\cale}_{Q_X}}_{Z, 0}(\widehat{\calm}_{Q_X})\in H^{0}_{Z}(Q_X; \calh\calh (\widehat{\cale}_{Q_X}))\,\, \text{and } ch^{\widehat{\cale}_{Q_X}}_{Z,0}(\widehat{\calm}_{Q_X})\in H^{0}_Z(Q_X; \calh \calc (\widehat{\cale}_{Q_X})),$$ where $Z=\text{supp}(\calm)/\Gamma$.

Consider the global section $$e=\frac{1}{|\Gamma|}\sum_{\gamma\in \Gamma} \gamma\in \Gamma(\cala)$$ of the sheaf $\cala$. 
It is easy to check that $e$ is a projection. Define a sheaf $\calv$ of $\cala$-$\widehat{\cale}_{Q_X}$-bimodules by $$\calv(U):=\widehat{\cale}_X(\mathfrak{q}^{-1}(U))\rtimes \Gamma e|_U,$$ and a sheaf $\calw$ of $\widehat{\cale}_{Q_X}$-$\cala$-bimodules  by $$\calw(U):=e|_U\widehat{\cale}_X(\mathfrak{q}^{-1}(U))\rtimes \Gamma.$$ 
 $\calv$ and $\calw$ are Morita equivalence bimodules between $\cala$ and $\widehat{\cale}_{Q_X}$. Under this Morita equivalence, $\widehat{\calm}_{Q_X}$ corresponds to the sheaf $\mathfrak{M}$.  With the explicit bimodules $\calv$ and $\calw$, 
we can easily check that under the Morita isomorphism between the Hochschild and cyclic homologies of 
$\widehat{\cale}_{Q_X}$ and those of $\cala$, the Hochschild and cyclic classes of $\widehat{\calm}_{Q_X}$ 
are identified with those of $\mathfrak{M}$. 
\[
\begin{split}
hh^\cala_{Z,0}(\mathfrak{M})&=hh^{\widehat{\cale}_{Q_X}}_{Z,0}(\widehat{\calm}_{Q_X})\in H^{0}_Z(Q_X; \calh\calh(\cala))\cong H^{0}_{Z}(Q_X; \calh\calh (\widehat{\cale}_{Q_X})),\qquad \\
{\rm ch}^\cala_{Z,0}(\mathfrak{M})&={\rm ch}^{\widehat{\cale}_{Q_X}}_{Z,0}(\widehat{\calm}_{Q_X})\in H^{0}_Z(Q_X; \calh\calc^{-}(\cala))\cong H^{0}_{Z}(Q_X;  \calh\calc^{-}(\widehat{\cale}_{Q_X})). 
\end{split}
\] 
The Hochschild and cyclic homology of $\cala$ is computed in \cite{DE} and \cite{NPPT}
\[
\mu^{\cala}:\calh\calh(\cala)\cong (\oplus_\gamma \complex_{X^\gamma}[\dim(X^\gamma)])^\Gamma,\qquad \mu^\cala: \calh\calc^-(\cala)\cong  (\oplus_{\gamma,\bullet\geq 0 }\complex_{X^\gamma}[\dim(X^\gamma)-2(\bullet)])^\Gamma,
\]
where $\gamma\in \Gamma$ acts on $\oplus_\gamma \complex_{X^\gamma}[\dim(X^\gamma)]$ mapping the $\alpha$-component to the $\gamma\alpha\gamma^{-1}$-component.  

Let $IQ_X$ be the inertia orbifold associated to $Q_X$, defined by $$IQ_X:=(\sqcup_{\gamma\in \Gamma} X^\gamma)/\Gamma,$$ where $\gamma\in \Gamma$ acts on $\sqcup_\gamma X^\gamma$ by mapping $(\alpha, x)$ with $\alpha(x)=x$ to $(\gamma\alpha\gamma^{-1}, \gamma(x))$. Let $\iota_{IQ_X}:IQ_X\to Q_X$ be the natural map 
defined by forgetting the group element. Thus, we have 
\[
(\oplus_{\gamma\in\Gamma} \complex_{X^\gamma}[\dim(X^\gamma)])^\Gamma=\iota_{IQ_X, *}\complex[\dim(IQ_X)]. 
\]

\begin{definition}\label{dfn:class}
The {\em orbifold Euler class} $\text{eu}_{Q_X}(\calm)$ (resp., the {\em orbifold Chern class} $\text{ch}_{Q_X}(\calm)$) of a good $\Gamma$-equivariant coherent $\cald_M$-module $\calm$ is defined to be the images of $hh^\cala_{Z,0}(\mathfrak{M})$ (resp., ${\rm ch}^\cala_{Z,0}(\mathfrak{M})$) in $H^{0}_Z(IQ_X; \complex[\dim(IQ_X)])$ 
(resp., $\oplus_{n \geq 0} H^0_Z(IQ_X; \complex[\dim(IQ_X)-2n])$). 
\end{definition}

\begin{remark} In \cite{BNT}, the classes $hh^\cala_{Z,0}$ and ${\rm ch}^\cala_{Z,0}$ are called the Euler and the Chern class respectively. Here, we distinguish them from their images in the $H^\bullet_Z(IQ_X, \complex[\dim(IQ_X)])$, which are closer to the classical Euler and Chern characters.  
\end{remark}

In the remaining part of this section, we will explain the relation between the Hochschild Lefschetz class in Definition \ref{def:lefschetz} and orbifold Hochschild class in Theorem \ref{thm:bnt}.

We observe that  $\sum_{\gamma\in \Gamma} hh^\gamma(\calm, \gamma)\in \bigoplus_{\gamma \in \Gamma} 
H^0_{\text{supp}(\calm)}(X; \calh\calh(\widehat{\cale}_X, \widehat{\cale}_X^\gamma))$ is invariant under 
the action of $\Gamma$ on $\bigoplus_{\gamma \in \Gamma} 
H^0_{\text{supp}(\calm)}(X; \calh\calh(\widehat{\cale}_X, \widehat{\cale}_X^\gamma))$ induced by the conjugation 
action $\alpha\mapsto \gamma\alpha\gamma^{-1}$ of $\Gamma$ on itself. Consider
\[
\begin{split}
\widetilde{hh}^{Q_X}_{Z,0}(\calm):= \frac{1}{|\Gamma|}\sum_{\gamma\in\Gamma} hh^\gamma(\calm, \gamma)&\in \Big(\bigoplus_{\gamma\in \Gamma} H^0_{\text{supp}(\calm)}(X; \calh\calh(\widehat{\cale}_X,\widehat{\cale}_X^\gamma))\Big)^\Gamma\\
&\cong H^0_{\text{supp}(\calm)}\Big(Q_X;\big( \bigoplus_{\gamma\in\Gamma} \calh\calh(\widehat{\cale}_X,\widehat{\cale}_X^\gamma)\big)^\Gamma \Big). \\
\end{split}
\]
Here, by abuse of notation, we also use the symbol $\widehat{\cale}_X$ to denote the sheaf $U \mapsto \widehat{\cale}_X(\mathfrak{q}^{-1}(U))$ of algebras on 
the orbifold $Q_X$. Note that the sheaf $\widehat{\cale}_X$ is a sheaf of algebras on $Q_X$ with a (local) $\Gamma$-action and that $\widehat{\cale}_{Q_X}\,=\, \widehat{\cale}_X^{\Gamma}$. 
The Hochschild homology $H^0_Z(Q_X,\calh\calh(\cala))$ is naturally isomorphic to  $\Big(\oplus_{\gamma \in \Gamma} 
H^0_{\text{supp}(\calm)}(X; \calh\calh(\widehat{\cale}_X, \widehat{\cale}_X^\gamma))\Big)^{\Gamma}$ (see e.g. \cite{DE}). 
 Identifying  $\Big(\oplus_{\gamma \in \Gamma} 
H^0_{\text{supp}(\calm)}(X; \calh\calh(\widehat{\cale}_X, \widehat{\cale}_X^\gamma))\Big)^{\Gamma}$  with $H^0_Z(Q_X,\calh\calh(\cala))$ using this isomorphism, the following equality holds.
\begin{theorem}\label{thm:class}
\[
\widetilde{hh}^Q_{Z,0}(\calm)=hh^\cala_{Z,0}(\mathfrak{M}). 
\]
\end{theorem}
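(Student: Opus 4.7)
The strategy is to compute the Bressler--Nest--Tsygan class $hh^{\cala}_{Z,0}(\mathfrak{M})$ explicitly via the trace-of-identity description in the second bullet of Theorem \ref{thm:bnt}, and then to project it onto each summand of the decomposition
\[
H^0_Z(Q_X,\calh\calh(\cala)) \;\cong\; \Bigl(\bigoplus_{\gamma\in \Gamma}H^0_{\mathrm{supp}(\calm)}(X;\calh\calh(\widehat{\cale}_X,\widehat{\cale}_X^\gamma))\Bigr)^\Gamma
\]
from \cite{DE,NPPT}. The aim is to show that the $\gamma$-component of $hh^{\cala}_{Z,0}(\mathfrak{M})$ agrees with $\tfrac{1}{|\Gamma|}\,hh^\gamma(\calm,\gamma)$; summing over $\gamma$ then yields the identification with $\widetilde{hh}^{Q}_{Z,0}(\calm)$.

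Concretely, I would first choose, locally around any compact subset of $X$, a $\Gamma$-equivariant locally free resolution $\mathcal{P}^\bullet\to \widehat{\calm}$ by $\widehat{\cale}_X$-modules (which exists because $\widehat{\calm}$ is good coherent and $\Gamma$ is finite). Then $\cala\otimes_{\widehat{\cale}_X}\mathcal{P}^\bullet \to \mathfrak{M}$ is a resolution of $\mathfrak{M}$ by locally projective $\cala$-modules, on which the identity endomorphism $\mathrm{id}_\mathfrak{M}$ is represented simply by the identity cocycle. Plugging this into the composition of Theorem \ref{thm:bnt} produces an explicit cocycle representative for $hh^{\cala}_{Z,0}(\mathfrak{M})$ inside $\cala\otimes^L_{\cala^e}\cala$.

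Next, the crossed-product decomposition $\cala = \bigoplus_\gamma \widehat{\cale}_X\cdot[\gamma]$ and the standard identification of the Hochschild homology of a crossed product naturally split this cocycle into a sum of pieces indexed by $\gamma\in\Gamma$. A direct unwinding identifies the $\gamma$-component with the composite
\[
\omega_X^{\otimes -1} \to \widehat{\calm}\stackrel{L}{\underline{\boxtimes}} D'_{\widehat{\cale}_X}(\widehat{\calm}) \xrightarrow{\hat{\gamma}\boxtimes \mathrm{id}} \gamma_*(\widehat{\calm})\stackrel{L}{\underline{\boxtimes}} D'_{\widehat{\cale}_X}(\widehat{\calm}) \to \calc_X^\gamma,
\]
divided by $|\Gamma|$, the denominator arising from the averaging involved in passing to $\Gamma$-invariants. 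By Lemma \ref{lks4.1.4} (applied with $u=\gamma$), this composite equals $hh^\gamma(\calm,\gamma)$, which establishes the theorem.

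The main obstacle is the careful bookkeeping of the $\tfrac{1}{|\Gamma|}$ factor across the induction functor, the Morita equivalence implemented by the idempotent $e=\tfrac{1}{|\Gamma|}\sum_\gamma \gamma$, and the crossed-product decomposition of Hochschild homology. A useful consistency check is the toy case $\widehat{\cale}_X = \complex$ and $\widehat{\calm}=V$ a finite-dimensional $\Gamma$-representation: there $hh^\cala_{Z,0}(\mathfrak{M})$ becomes the character $\chi_V$ viewed in $HH_0(\complex\Gamma)\cong \complex^{\mathrm{Cl}(\Gamma)}$ with the BNT normalization $\mathrm{tr}(\mathrm{id}_V) = \chi_V/|\Gamma|$, while $hh^\gamma(V,\gamma) = \chi_V(\gamma)$, which matches the claimed equality precisely and fixes the correct $\tfrac{1}{|\Gamma|}$ normalization to propagate to the general sheaf-theoretic setting.
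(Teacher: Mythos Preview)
Your proposal is correct and follows essentially the same approach as the paper: both compute $hh^{\cala}_{Z,0}(\mathfrak{M})$ via the trace-of-identity characterization in Theorem~\ref{thm:bnt}, decompose along the crossed-product splitting $\cala=\bigoplus_{\gamma}\widehat{\cale}_X\cdot[\gamma]$, and identify the $\gamma$-component with $\tfrac{1}{|\Gamma|}\,hh^\gamma(\calm,\gamma)$. The paper carries this out by writing down four explicit comparison maps $L,\tilde{L},\bar{L},\hat{L}$ between the relevant $\calh om$ and tensor sheaves (with $L(\mathrm{id})=\tfrac{1}{|\Gamma|}\sum_\gamma\gamma$) and verifying three commutative squares, rather than via an equivariant resolution and the induction functor as you do; the underlying computation is the same, and your invocation of Lemma~\ref{lks4.1.4} to recognize the resulting composite as $hh^\gamma(\calm,\gamma)$ is exactly the identification the paper makes implicitly in its final sentence.
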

We shall now sketch the proof of Theorem~\ref{thm:class}, leaving details to the interested reader.
\begin{proof}[Sketch of proof]
In what follows, $\widehat{\cale}:=\widehat{\cale}_X$ is thought of as a sheaf of algebras on $Q_X$.  Let $\gamma_*(\mathfrak{M})$ denote the sheaf $\mathfrak{M}$ on $Q_X$, whose $\widehat{\cale}$-module 
structure is twisted by $\gamma$ like $\gamma_*(M)$. 

Define $L:\calh om_{\widehat{\cale}\rtimes \Gamma}(\mathfrak{M}, \mathfrak{M})\to \big(\bigoplus_{\gamma\in\Gamma} \calh om_{\widehat{\cale}}(\mathfrak{M}, \gamma_*(\mathfrak{M}))\big)^\Gamma$ by 
\[
L(F):=\frac{1}{|\Gamma|}\sum_{\gamma\in \Gamma} \gamma\circ F. 
\]
Here, $\gamma \circ F(m) = \gamma^{-1}(F(\gamma(m)))$ for $m\,\in\,\Gamma(U,\widehat{\cale}_X)$. 
Define $$\tilde{L}:\calh om_{\widehat{\cale}\rtimes \Gamma}(\mathfrak{M}, \widehat{\cale}\rtimes \Gamma)\otimes_{\widehat{\cale}\rtimes \Gamma}\mathfrak{M}\to \big(\bigoplus_{\gamma\in\Gamma} \calh om_{\widehat{\cale}}(\mathfrak{M}, \widehat{\cale})\otimes_{\widehat{\cale}}\gamma_*(\mathfrak{M})\big)^\Gamma$$ by
\[
\tilde{L}(F\otimes_{\widehat{\cale}\rtimes \Gamma} m):=\frac{1}{|\Gamma|}\sum_\gamma\sum_\alpha F_\alpha\otimes \gamma^{-1}(\alpha(m)),
\]
where $F_\alpha\in \calh om_{\widehat{\cale}}(\mathfrak{M}, \widehat{\cale})$ is defined by $F=\sum_{\alpha} F_\alpha \otimes \alpha \in \calh om_{\widehat{\cale}}(\mathfrak{M}, \widehat{\cale})\otimes_{\widehat{\cale}}(\widehat{\cale} \rtimes \Gamma)$ and 
$ \gamma^{-1}(\alpha(m))$ is viewed as a section of $\gamma_*(\mathfrak{M})$.  Also recall that the action of an element $g \in \Gamma$ on 
$\bigoplus_{\gamma\in \Gamma} \calh om_{\widehat{\cale}}(\mathfrak{M}, \widehat{\cale})\otimes_{\widehat{\cale}}\gamma_*(\mathfrak{M})$ 
takes a section of the form $F(\mbox{--}) \otimes m$ to $g(F(g^{-1}(\mbox{--}))) \otimes g.m$. The morphisms denoted by $\mu$ in the diagram below are the obvious ``evaluation'' maps: 
\[
\begin{diagram}
\node{\calh om_{\widehat{\cale}\rtimes \Gamma}(\mathfrak{M}, \mathfrak{M})}\arrow{s,l,t}{L}\node{\calh om_{\widehat{\cale}\rtimes \Gamma}(\mathfrak{M}, \widehat{\cale}\rtimes \Gamma)\otimes_{\widehat{\cale}\rtimes \Gamma}\mathfrak{M}}\arrow{w,l,t}{\mu} \arrow{s,l,t}{\tilde{L}}\\ 
\node{\big(\bigoplus_{\gamma\in\Gamma} \calh om_{\widehat{\cale}}(\mathfrak{M}, \gamma_*(\mathfrak{M}))\big)^\Gamma}\node{\big(\bigoplus_{\gamma\in \Gamma} \calh om_{\widehat{\cale}}(\mathfrak{M}, \widehat{\cale})\otimes_{\widehat{\cale}}\gamma_*(\mathfrak{M})\big)^\Gamma}\arrow{w,t,t}{\mu}
\end{diagram}
\] 
It is straightforward to check that the following diagram commutes. 

Define $\bar{L}: \big(\calh om_{\widehat{\cale}\rtimes \Gamma}\big(\mathfrak{M},\widehat{\cale}\rtimes \Gamma\big)\boxtimes \mathfrak{M}\big)\otimes_{\cala_{Q\times Q^{a}}}( \widehat{\cale}\rtimes \Gamma)\to \Big(\big(\bigoplus_{\gamma\in\Gamma} \calh om_{\widehat{\cale}}(\mathfrak{M}, \widehat{\cale})\boxtimes\gamma_*(\mathfrak{M})\big)\otimes_{\widehat{\cale}\otimes \widehat{\cale}^{a}}\widehat{\cale} \Big)^\Gamma$ by
\[
\bar{L}\big(((F_\alpha \otimes\alpha)\boxtimes m)\otimes (d\otimes \beta)\big):=\frac{1}{|\Gamma|}\sum_{\gamma\in \Gamma} (F_\alpha \boxtimes \gamma^{-1}\alpha\beta(m))\otimes \alpha(d). 
\]
Here, $\gamma^{-1}\alpha\beta(m)$ is viewed as a section of $\gamma_*(\mathfrak{M})$. Also recall that the action of an element 
$g \in \Gamma$ on $\bigoplus_\gamma \calh om_{\widehat{\cale}}(\mathfrak{M}, \widehat{\cale})\boxtimes\gamma_*(\mathfrak{M})$ takes a section $(F(\mbox{--}) \boxtimes m) \otimes f$ to $(g(F(g^{-1}(\mbox{--}))) \boxtimes g.m) \otimes g.f$. The following diagram commutes:
\[
\begin{diagram}
\node{\calh om_{\widehat{\cale}\rtimes \Gamma}(\mathfrak{M}, \widehat{\cale}\rtimes \Gamma)\otimes_{\widehat{\cale}\rtimes \Gamma}\mathfrak{M}}\arrow{s,l,t}{\tilde{L}}\node{\big(\calh om_{\widehat{\cale}\rtimes \Gamma}\big(\mathfrak{M},\widehat{\cale}\rtimes \Gamma\big)\boxtimes \mathfrak{M}\big)\otimes_{\cala_{Q\times Q^{a}}}( \widehat{\cale}\rtimes \Gamma)}\arrow{w,t,t}{\cong}\arrow{s,l,t}{\bar L}\\ 
\node{\big(\bigoplus_{\gamma\in \Gamma} \calh om_{\widehat{\cale}}(\mathfrak{M}, \widehat{\cale})\otimes_{\widehat{\cale}}\gamma_*(\mathfrak{M})\big)^\Gamma} \node{\Big(\big(\bigoplus_{\gamma\in \Gamma} \calh om_{\widehat{\cale}}(\mathfrak{M}, \widehat{\cale})\boxtimes\gamma_*(\mathfrak{M})\big)\otimes_{\widehat{\cale}\otimes \widehat{\cale}^{a}}\widehat{\cale} \Big)^\Gamma}\arrow{w,t, t}{\cong}
\end{diagram}
\]
Define $\hat{L}:(\widehat{\cale}\rtimes \Gamma)\otimes_{\cala_{Q\times Q^{a}}}(\widehat{\cale}\rtimes \Gamma)\to \big(\bigoplus_{\gamma\in\Gamma} \gamma_*(\widehat{\cale}) \otimes \widehat{\cale}\big)^\Gamma$ by
\[
\hat{L}((e_0\otimes \alpha)\otimes (e_1\otimes \beta))=\frac{1}{|\Gamma|}\sum_{\gamma\in\Gamma}\gamma(e_0)\otimes \gamma\alpha(e_1).
\]
Here, $\gamma(e_0)$ is viewed as a section of $(\gamma\alpha\beta\gamma^{-1})_*(\widehat{\cale})$ and $\gamma\alpha(e_1)$ is viewed as a section of $\cale$. Also recall that the action of an element $g \in \Gamma$ maps a section $e \otimes f$ of $\gamma_*(\widehat{\cale}) \otimes\cale$ to the section $ge \otimes gf$ of $(g\gamma g^{-1})_*(\widehat{\cale}) \otimes \cale$. We further have the following commutative diagram:
\[
\begin{diagram}\node{\big(\calh om_{\widehat{\cale}\rtimes \Gamma}\big(\mathfrak{M},\widehat{\cale}\rtimes \Gamma\big)\boxtimes \mathfrak{M}\big)\otimes_{\cala_{Q\times Q^{a}}}( \widehat{\cale}\rtimes \Gamma)}\arrow{s,l,t}{\bar{L}}\arrow{e,t,t}{ev\otimes id}\node{(\widehat{\cale}\rtimes \Gamma)\otimes_{\cala_{Q\times Q^{a}}}(\widehat{\cale}\rtimes \Gamma)}\arrow{s,r,t}{\hat{L}}\\
\node{\Big(\big(\bigoplus_{\gamma\in \Gamma} \calh om_{\widehat{\cale}}(\mathfrak{M}, \widehat{\cale})\boxtimes\gamma_*(\mathfrak{M})\big)\otimes_{\widehat{\cale}\otimes \widehat{\cale}^{a}}\widehat{\cale} \Big)^\Gamma}\arrow{e,t,t}{ev\otimes id}\node{\big(\bigoplus_{\gamma\in \Gamma} \gamma_*(\widehat{\cale})\otimes \widehat{\cale}\big)^\Gamma}
\end{diagram}
\]
Combining the above three commutative diagrams gives us the desired theorem: indeed, the image of $\text{id} \,\in Hom_{\widehat{\cale} \rtimes \Gamma}(\mathfrak{M},\mathfrak{M})$ in $H^0(Q_X, (\widehat{\cale} \rtimes \Gamma) \stackrel{L}{\otimes}_{\cala_{Q_X \times Q_X^{a}}}  (\widehat{\cale} \rtimes \Gamma))$ under the morphism induced by the upper horizontal arrows in the above three diagrams is $hh^\cala_{Z,0}(\mathfrak{M})$, while the image of $\gamma \in Hom_{\widehat{\cale}}(\mathfrak{M},\gamma_*(\mathfrak{M}))$ in $H^0(Q_X; \calh\calh(\widehat{\cale},\widehat{\cale}^{\gamma}))$ under the morphism induced by the lower arrows in the above three diagrams is  $hh^{\gamma}(\calm,\gamma)$. 
\end{proof}

\begin{remark}\label{rmk:orbifold}  
The orbifold Euler class in Definition \ref{dfn:class} has a direct generalization to general orbifolds other than global quotient orbifolds, i.e. orbifolds 
of the form $M/\Gamma$. This generalization is obtained by working 
with the sheaf of invariant differential operators as explained in \cite{FT}. 
 Our orbifold Riemann-Roch theorem in the next section also generalizes to this setting. 
We will leave the details of this generalization to the reader.  
\end{remark}
\section{Euler class on an orbifold}\label{sec:euler}
In this section, we prove an orbifold Riemann-Roch theorem for the orbifold Euler class $\text{eu}_{Q_X}(\calm)$ of a good $\Gamma$-equivariant coherent $\cald_M$-module introduced in  Definition \ref{dfn:class}.  Our main strategy is to generalize the method developed by Bressler, Nest, and Tsygan \cite{BNT} to  orbifold setting. 

\subsection{Deformation quantization}
Our strategy to compute the $\text{eu}_{Q_X}(\calm)$ is to transfer the computation to a more flexible context: that of
 deformation quantization modules.  Closely related to $\widehat{\cale}_X$ and $\widehat{\calm}$, 
is the (sheaf of) deformation quantization algebra(s)
 $\widehat{\calw}_{X}(0)$ on $X=T^*M$ over the ring $\complex[[\hbar]]$ constructed in \cite{PS}
 modeled on (the sheaf of) negative order formal microdifferential operators. 
Let $\widehat{\calw}_X$ be the localization of $\widehat{\calw}(0)$ defined by 
$$
\widehat{\calw}_X:=\widehat{\calw}(0)\otimes_{\complex[[\hbar]]}\complex((\hbar)).
$$ 
The sheaves of algebras $\cald_M$, $\widehat{\cale}_X$, and $\widehat{\calw}_{X}$ are naturally 
related to one another by the inclusions
\[
\pi_M^{-1}\cald_M\hookrightarrow \widehat{\cale}_X\hookrightarrow \widehat{\calw}_X.
\]
Following~\cite{KS} we consider the functor 
$$(\cdot)^W: Mod(\cald_M)\to Mod(\widehat{\calw}_X)\,,$$ 
$$ \calm \mapsto \calm^W:=\widehat{\calw}_X\otimes_{\pi^{-1}_M\cald_M}\pi^{-1}_M \calm\,\text{.}$$
 By~\cite[Proposition 6.4.1]{KS}, this functor is exact, faithful, and preserves properties such as coherence and goodness. 

Note that the Lefschetz class of a good coherent $\cald_M$-module with values in the Hochschild homology of $\widehat{\calw}_X$ can be defined in exactly the same way as how the Lefschetz class of a good coherent $\cald_M$-module with values in the Hochschild homology of $\widehat{\cale}_X$ is defined in Section~\ref{sec:lefschetz}. 

To be precise, given a good coherent $\cald_M$-module $\calm,$ we consider the associated good coherent $\widehat{\calw}_X$-module $\calm^W$. As is explained in Section \ref{sec:notations}, $\gamma_*$ defines a natural functor from $D^b(\widehat{\calw}_X)$ (and $D^b_{\text{coh}}(\widehat{\calw}_X)$) to itself. Let $\calc_X^{W}:=\delta_{X,*} \widehat{\calw}_X$, viewed as a 
$\widehat{\calw}_{X \times X^a}$-module. Similarly, let $\calc_X^{\gamma,W}:= \delta_{X,*}^{\gamma} 
\widehat{\calw}_X$. 
 We have a natural morphism analogous to that of Lemma~\ref{lemma:mor}:
\[
\gamma_*( {\calm}^W)\stackrel{L}{\underline{\boxtimes}}D'_{\widehat{\calw}_X}(\calm^W)\rightarrow \calc_X^{\gamma,W}\,.
\] 
For  $u\in Hom_{{\cald}_M}(\calm, \gamma_*(\calm))$, the definition of the Hochschild Lefschetz class $hh^{\gamma, W}(\calm,u)$ of a 
good coherent $\cald_M$ module $\calm$ is completely analogous to Definition~\ref{def:lefschetz}. Indeed, 
$hh^{\gamma, W}(\calm,u)$ is defined to be the image of $\hat{u}^W \in Hom_{\widehat{\calw}_X}(\calm^W,\gamma_*(\calm^W))$ 
under the morphism induced on hypercohomologies by 
the following composite of morphisms:
\[
\begin{split}
R\calh om_{\widehat{\calw}_X}(\calm^W, \gamma_*(\calm^W))&\stackrel{\sim}{\leftarrow}D'_{\widehat{\calw}_X}(\calm^W)\stackrel{L}{\otimes}_{\widehat{\calw}_X}\gamma_*(\calm^W)\\
&\cong \calc^{W} _{X^a}\stackrel{L}{\otimes}_{\widehat{\calw}_{X\times X^a}}\big(\gamma_*(\calm^W)\stackrel{L}{\underline{\boxtimes}}D'_{\widehat{\calw}}(\calm^W)\big)\\
&\rightarrow \calc^{W} _{X^a}\stackrel{L}{\otimes}_{\widehat{\calw}_{X\times X^a}}\calc_{X}^{\gamma,W}=\calh\calh(\widehat{\calw}_X, \widehat{\calw}_X^\gamma).
\end{split}
\]
One can similarly provide definitions of  $\mu eu^{\gamma,W}(\calm,u)$, ${\rm eu}^W_{Q_X}(\calm)$, and ${\rm ch}^W_{Q_X}(\calm)$ that are completely analogous to the corresponding definitions in Section \ref{sec:lefschetz}. 

Recall that the support of $\widehat{\calm}:=\widehat{\cale}\otimes_{\pi^{-1}_M\cald_M}\pi^{-1}_M\calm$ in $X$ is called the characteristic variety of $\calm$ and denoted by $\text{char}(\calm)$. The following Lemma is a direct generalization of \cite[Lemma 6.5.1]{KS} to the $\gamma$ twisted setting. Let $\iota:X^{\gamma} 
\rightarrow X$ be as in Section~\ref{sec:lefschetz}.
 
\begin{proposition}\label{prop:localization} There is a natural trace density isomorphism 
\[
\calh\calh(\widehat{\calw}_X, \widehat{\calw}_X^\gamma)\xrightarrow{\mu^{\widehat{\calw}}}\iota_!\complex_{X^\gamma}((\hbar))[\dim(X^\gamma)]
\]
in the derived category of sheaves of $\complex((\hbar))$-vector spaces on $X$ such that the diagram following commutes:
\[
\begin{diagram}
\node{\calh\calh(\widehat{\cale}_X, \widehat{\cale}_X^\gamma)}\arrow{s,l,t}{}\arrow{e,t,t}{\mu^{\widehat{\cale}}}\node{\iota_!\complex_{X^\gamma}[\dim(X^\gamma)]}\arrow{s,l,t}{}\\
\node{\calh\calh(\widehat{\calw}_X, \widehat{\calw}_X^\gamma)}\arrow{e,t,t}{\mu^{\widehat{\calw}}}\node{\iota_!\complex((\hbar))_{X^\gamma}[\dim(X^\gamma)]}
\end{diagram}.
\]
Therefore, using the natural map from $H^{\dim(X^\gamma)}_{\text{char}(\calm)^\gamma}(X; \complex_{X^\gamma})$ to 
$H^{\dim(X^\gamma)}_{\text{char}(\calm)^\gamma}(X; \complex_{X^\gamma}((\hbar)))$ to 
identify $H^{\dim(X^\gamma)}_{\text{char}(\calm)^\gamma}(X; \complex_{X^\gamma})$ with its image in 
$H^{\dim(X^\gamma)}_{\text{char}(\calm)^\gamma}(X; \complex_{X^\gamma}((\hbar)))$, 
one obtains the following identities for a good $\Gamma$-equivariant  coherent $\cald_M$ module $\calm$.
\[
\begin{split}
hh^{\gamma}(\calm,\gamma)=hh^{\gamma,W}(\calm, \gamma),\  &
{\rm eu}_{Q_X}(\calm)={\rm eu}^W_{Q_X}(\calm),\\
\mu eu^{\gamma}(\calm, \gamma)=\mu eu^{\gamma, W}(\calm, \gamma),\  &{\rm ch}_{Q_X}(\calm)={\rm ch}^W_{Q_X}(\calm).
\end{split}
\]
\end{proposition}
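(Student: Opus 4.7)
The plan is to first construct the twisted trace density $\mu^{\widehat{\calw}}$ for the deformation quantization algebra, then verify its compatibility with $\mu^{\widehat{\cale}}$ under the natural embedding $\widehat{\cale}_X \hookrightarrow \widehat{\calw}_X$, and finally deduce the identities of the various classes from the naturality of their construction.

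For the construction of $\mu^{\widehat{\calw}}$, since $\widehat{\calw}_X$ is a deformation quantization of the symplectic manifold $X = T^*M$ over $\complex((\hbar))$, the $\gamma$-twisted trace density construction of Feigin-Felder-Shoikhet \cite{FFS}, as adapted to the equivariant setting in \cite{EF, PPT2, FT}, applies essentially verbatim. Choosing an equivariant Fedosov connection, one locally identifies $\widehat{\calw}_X$ with a formal Weyl algebra bundle and produces a quasi-isomorphism from $\calh\calh(\widehat{\calw}_X, \widehat{\calw}_X^\gamma)$ to the shifted de Rham complex on $X^\gamma$ with coefficients in $\complex((\hbar))$. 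Composing with the standard resolution $\iota_!\complex_{X^\gamma}((\hbar))[\dim(X^\gamma)] \simeq \Omega_{X^\gamma}^{\dim(X^\gamma)-\bullet}((\hbar))$ yields $\mu^{\widehat{\calw}}$. For commutativity of the displayed diagram, observe that $\widehat{\cale}_X \hookrightarrow \widehat{\calw}_X$ is a flat morphism of filtered $\complex$-algebra sheaves, and in local Darboux-type coordinates near $X^\gamma$ both sheaves fit into the same local Weyl-algebra model used in the trace density construction. Since $\mu^{\widehat{\cale}}$ (from Proposition \ref{prop:gamma-hoch}) and $\mu^{\widehat{\calw}}$ are built from the same local twisted Hochschild-Kostant-Rosenberg cochain map on this Weyl model, the square commutes by construction; this is the $\gamma$-twisted generalization of \cite[Lemma 6.5.1]{KS}.

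To deduce the identities, observe that the morphism defining $hh^{\gamma, W}(\calm, \gamma)$ is obtained from the morphism defining $hh^{\gamma}(\calm, \gamma)$ by applying the exact functor $\widehat{\calw}_X \otimes_{\widehat{\cale}_X}(-)$, which commutes with duality and with $\underline{\boxtimes}$ by flatness (cf.\ \cite[Proposition 6.4.1]{KS}). Consequently $hh^{\gamma, W}(\calm, \gamma)$ is the image of $hh^{\gamma}(\calm, \gamma)$ under the left vertical arrow of the diagram, and commutativity then identifies their images under the two trace densities in $H^{\dim(X^\gamma)}_{\text{char}(\calm)^\gamma}(X; \complex_{X^\gamma}((\hbar)))$. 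The remaining identities $\mu \text{eu}^\gamma(\calm, \gamma) = \mu \text{eu}^{\gamma, W}(\calm, \gamma)$, $\text{eu}_{Q_X}(\calm) = \text{eu}^W_{Q_X}(\calm)$, and $\text{ch}_{Q_X}(\calm) = \text{ch}^W_{Q_X}(\calm)$ then follow by applying the averaging procedure of Theorem \ref{thm:class} in parallel to both algebras. The main technical obstacle is the globalization of $\mu^{\widehat{\calw}}$ from local Fedosov models into a well-defined morphism in the derived category of sheaves, together with the verification that the glueing cocycles for $\mu^{\widehat{\cale}}$ and $\mu^{\widehat{\calw}}$ match so that the square genuinely commutes on the nose rather than only locally; once this is handled along the lines of \cite{PPT2, EF, FT}, the rest of the argument is formal.
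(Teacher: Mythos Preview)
Your proposal is correct and follows precisely the route the paper indicates. The paper itself does not give a detailed proof of this proposition: it simply states that the result ``is a direct generalization of \cite[Lemma 6.5.1]{KS} to the $\gamma$ twisted setting,'' and leaves the details implicit. Your argument fills in exactly those details---constructing $\mu^{\widehat{\calw}}$ via the equivariant Feigin--Felder--Shoikhet trace density (as in \cite{FFS,EF,PPT2,FT}), checking compatibility with $\mu^{\widehat{\cale}}$ through the common local Weyl model, and then deducing the class identities from flatness of $\widehat{\calw}_X$ over $\widehat{\cale}_X$ (via \cite[Proposition 6.4.1]{KS})---so there is nothing to compare beyond noting that you have written out what the paper only gestures at.
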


\subsection{Orbifold Riemann-Roch theorem}
In this subsection, we describe the geometric formula for the orbifold Euler class ${\rm Eu}_Q(\calm)$ of a good $\Gamma$-equivariant coherent $\cald_M$ module $\calm$.

We recall some geometry of the orbifold $Q_X=X/\Gamma$. Note that $X^\gamma$ may have several components with different dimensions, but each component of $X^\gamma$ is a submanifold of $X$. Consider a vector bundle  $V$ on $X^\gamma$ equipped with a $\gamma$ action on each fiber. Let $R^V$ be the curvature of a connection on $V$. 
Define ${\rm ch}_\gamma(V)$ to be 
\[
{\rm ch}_\gamma(V):=\tr\left(\gamma \exp\left(\frac{R^V}{2\pi \sqrt{-1}}\right)\right)\in H^{even}(X^\gamma;\complex).
\]
Over each component of $X^\gamma$, let $N^\gamma$ be the normal bundle of $X^\gamma$ to $X$. Observe that $\gamma$ acts on fibers of $N^\gamma$ and $\wedge^\bullet N^\gamma$. Define
\[
{\rm eu}_\gamma(N^\gamma):=\sum_\bullet (-1)^\bullet {\rm ch}_\gamma(\wedge^\bullet N^\gamma)=\det\left(1-\gamma^{-1}\exp\left(\frac{-R^{\perp}}{2\pi \sqrt{-1}}\right)\right),
\]
where $R^\perp$ is the curvature of a connection on $N^\gamma$. The group $\Gamma$ naturally acts on $\sqcup_\gamma X^\gamma$: $\gamma\in \Gamma$ maps $x\in X^\alpha$ to $\gamma(x)\in X^{\gamma\alpha\gamma^{-1}}$. It is straightforward to see that 
\[
{\rm eu}_Q(N):=\sum_{\gamma\in \Gamma}{\rm eu}_{\gamma}(N^\gamma)\in \bigoplus_{\gamma\in\Gamma} H^{even}_{\text{char}(\calm)^\gamma}(X^\gamma; \complex)
\]
is invariant under the $\Gamma$ action on the above direct sum of the cohomology groups. 

Given a good $\Gamma$-equivariant coherent $\cald_M$-module $\calm$, we consider 
\[
\widehat{\calm}:=\widehat{\cale}_X\otimes_{\pi^{-1}_M\cald_M}\pi^{-1}_M\calm,
\]
equipped with a natural filtration, whose support is denoted by $\text{char}(\calm)$. The sheaf $\widehat{\cale}_X$ has a natural filtration $\{\calf^\bullet\widehat{\cale}_X\}$ by the order $\bullet$ of an operator. Let ${\rm Gr}\widehat{\cale}_X$ (resp., ${\rm Gr}\widehat{\calm}$) denote the associated graded algebra and module of $\widehat{\cale}_X$ (resp., $\widehat{\calm}$). Define 
\[
\widetilde{\rm Gr}(\widehat{\calm}):=\calo_X\otimes_{{\rm Gr}\widehat{\cale}_X} {\rm Gr}\widehat{\calm} .
\]

The symbol $\sigma_{\text{char}(\calm)}(\calm)$ is the element in the $\Gamma$-equivariant K-theory $K^{top,\Gamma}_{\text{char}(\calm)}(T^*M)$ defined by $\widetilde{\rm Gr}(\widehat{\calm})$. Restricted to $X^\gamma$, $\sigma_{\text{char}(\calm)}(\calm|_{X^\gamma})$ defines a K-theory element on $X^\gamma$ inheriting a $\gamma$-action. Applying ${\rm ch}_\gamma(-)$ to this element, one defines an element ${\rm ch}_\gamma(\sigma_{\text{char}(\calm)}(\calm|_{X^\gamma}))$ in $H^{even}_{\text{char}(\calm)^\gamma}(X^\gamma)$. As $\calm$ is $\Gamma$-equivariant, one easily checks that the collection
\[
{\rm ch}_Q(\sigma_{\text{char}(\calm)}(\calm)):=\frac{1}{|\Gamma|}\sum_{\gamma\in \Gamma} {\rm ch}_\gamma \big(\sigma_{\text{char}(\calm)} (\calm|_{X^\gamma})\big) \in \bigoplus_{\gamma\in\Gamma} H^{even}_{\text{char}(\calm)^\gamma}(X^\gamma; \complex)
\] 
is invariant under the $\Gamma$ action on the above direct sum of the cohomology groups. 

Define $\text{char}_Q(\calm)\subset IQ_X$ to be the quotient $(\sqcup_{\gamma\in \Gamma} \text{char}(\calm)^\gamma)/\Gamma$, where $\Gamma$ acts on $\sqcup_{\gamma\in\Gamma} \text{char}(\calm)^\gamma$ via its action on $\sqcup_{\gamma\in \Gamma} X^\gamma$. We are now ready to state the main theorem of this paper. 

\begin{theorem}\label{thm:obfld-ss}(Orbifold Riemann-Roch) Let $Q_M$ be the quotient of $M$ by $\Gamma$. For a good $\Gamma$-equivariant coherent $\cald_M$ module $\calm$, we have
\[
{\rm eu}_Q(\calm)=\frac{1}{m}\Big({\rm ch}_Q(\sigma_{\text{char}(\calm)}(\calm))\wedge {\rm eu}_Q(N)\wedge \pi^*Td_{IQ_M}\Big)_{\dim(I Q_X)}
\]
as a cohomology class in $H^{\dim(IQ_X)}_{\text{char}_Q(\calm)}(IQ_X;\complex)$. Here, $Td_{I Q_M}$ is the Todd class of the orbifold $IQ_M$ defined by
\[
Td_{I Q_M}:=\tr\left(\frac{\frac{R}{2\pi \sqrt{-1}}}{1-\exp^{-\frac{R}{2\pi \sqrt{-1}}}}\right),
\]
where $R$ is the curvature of a connection on the tangent bundle $TIQ_M$, $\pi: IQ_X\to IQ_M$ is the natural projection, and $m$ denotes the locally constant function on $IQ$ measuring the size of the isotropy group. 
\end{theorem}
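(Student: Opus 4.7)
The plan is to combine three ingredients already established in the paper: the identification of the orbifold Hochschild class with the average of twisted Lefschetz classes (Theorem~\ref{thm:class}), the passage from $\widehat{\cale}_X$ to the deformation quantization sheaf $\widehat{\calw}_X$ (Proposition~\ref{prop:localization}), and a $\gamma$-twisted trace density computation on $\widehat{\calw}_X$ carried out componentwise on each inertia stratum $X^\gamma$.

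First I would use Proposition~\ref{prop:localization} to replace $\text{eu}_Q(\calm)$ by its $\widehat{\calw}_X$-version, and then Theorem~\ref{thm:class} to write the latter as the image under the trace density $\mu^{\widehat{\calw}}$ of the $\Gamma$-invariant class $\frac{1}{|\Gamma|}\sum_{\gamma} hh^{\gamma,W}(\calm,\gamma)$. This reduces the theorem to a $\gamma$-by-$\gamma$ local Lefschetz--Riemann--Roch identity
\[
\mu^{\widehat{\calw}}\bigl(hh^{\gamma,W}(\calm,\gamma)\bigr) \,=\, \Bigl({\rm ch}_\gamma\bigl(\sigma_{\text{char}(\calm)}(\calm)|_{X^\gamma}\bigr)\wedge {\rm eu}_\gamma(N^\gamma)\wedge \pi_M^{*}Td_{M^\gamma}\Bigr)_{\dim X^\gamma}
\]
in $H^{\dim X^\gamma}_{\text{char}(\calm)^\gamma}(X^\gamma;\complex)$. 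Averaging over $\gamma$ and descending along the quotient $\sqcup_\gamma X^\gamma \to IQ_X$ converts the $|\Gamma|^{-1}$ prefactor, together with the multiplicities of $\Gamma$-orbits, into the locally constant function $m^{-1}$ that appears in the statement.

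To prove the local identity for fixed $\gamma$, I would first apply Lemma~\ref{lks4.1.4} (in its $\widehat{\calw}_X$-version) to represent $hh^{\gamma,W}(\calm,\gamma)$ as the composite $\omega_X^{\otimes -1}\to \calm^W\stackrel{L}{\underline{\boxtimes}}D'(\calm^W)\to \gamma_*(\calm^W)\stackrel{L}{\underline{\boxtimes}}D'(\calm^W)\to \calc_X^{\gamma,W}$; by the standard $\widetilde{{\rm Gr}}$/symbol argument this class depends only on the class of $\widetilde{{\rm Gr}}(\widehat{\calm})$ in the $\gamma$-equivariant topological $K$-theory near $X^\gamma$, i.e.\ only on $\sigma_{\text{char}(\calm)}(\calm)$. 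Working in an equivariant Darboux neighbourhood of $X^\gamma$ in which $X$ splits symplectically as $X^\gamma\times N^\gamma$ with $\gamma$ acting trivially on the first factor and without fixed vectors on the second, one then applies the Fedosov/Bressler--Nest--Tsygan trace density machinery in the $\gamma$-twisted form developed in~\cite{PPT,PPT2} (building on~\cite{FFS,EF,FT}). The Fedosov connection on the $X^\gamma$-factor yields the Todd factor $\pi_M^{*}Td_{M^\gamma}$, the $\gamma$-twisted Chern--Weil representative of $\widetilde{{\rm Gr}}(\widehat{\calm})|_{X^\gamma}$ produces ${\rm ch}_\gamma(\sigma|_{X^\gamma})$, and integrating out the normal Weyl algebra of $N^\gamma$ against the $\gamma$-twist collapses to the determinantal factor ${\rm eu}_\gamma(N^\gamma)$.

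The main obstacle is precisely this last ingredient: a clean evaluation of the $\gamma$-twisted Fedosov trace on the Weyl algebra of $(N^\gamma,\omega|_{N^\gamma})$ for a symplectic automorphism with no fixed vectors. Concretely, one needs to show that the twisted Shoikhet/Feigin--Felder--Shoikhet cocycle applied to the identity endomorphism of the canonical module evaluates to $\det\bigl(1-\gamma^{-1}e^{-R^\perp/2\pi\sqrt{-1}}\bigr)$, matching the definition of ${\rm eu}_\gamma(N^\gamma)$. This computation, already implicit in the orbifold algebraic index theorems of~\cite{PPT,PPT2}, is the technical heart of the argument; once it is in place, $\Gamma$-equivariance of the whole construction together with the identification $\bigl(\bigoplus_{\gamma}\calf_\gamma\bigr)^\Gamma\cong \iota_{IQ_X,*}\calf$ and the elementary count for the fibres of $\sqcup_\gamma X^\gamma\to IQ_X$ turns the averaged sum into the advertised orbifold formula on the inertia orbifold $IQ_X$.
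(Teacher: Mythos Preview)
Your route is genuinely different from the paper's. The paper never decomposes via Theorem~\ref{thm:class} into $\gamma$-components and does not carry out a local Darboux computation on each $X^\gamma$. Instead it works throughout with the crossed product $\cala=\widehat{\cale}_X\rtimes\Gamma$ and its cyclic Chern character ${\rm ch}^{\cala}$ from Theorem~\ref{thm:bnt}. The passage from ${\rm ch}^{\widehat{\cale}}(\widehat{\calm})$ to the Chern character of the symbol is handled by the \emph{Rees construction}: one introduces $\calr\widehat{\cale}_X=\bigoplus_p\hbar^p\calf^p\widehat{\cale}_X$, which maps compatibly to $\widehat{\calw}_X(0)$, to $\widehat{\cale}_X((\hbar))$, and (through the associated graded) to $\calo_X$, and then chases a commutative diagram of Chern characters in periodic cyclic homology (Propositions~\ref{prop:gr}--\ref{prop:diagram-ch}). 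The characteristic-class factors $\tfrac{1}{m}\,{\rm eu}_Q(N)\wedge\pi^*Td_{IQ_M}$ enter in a single stroke as the discrepancy between the two trace densities $\mu^{\widehat{\calw}}\circ\iota^{\hbar^{-1},\widehat{\calw}}_*$ and $\sigma^{\widehat{\calw}(0)}_*$ on $HC^{per}_0(\widehat{\calw}_{Q_X}(0))$; this is Theorem~\ref{thm:alg-rr}, a reformulation of~\cite[Theorem~5.13]{PPT2}. Your componentwise Lefschetz strategy is conceptually attractive---each factor has a visible local origin---and ultimately feeds on the same local input from~\cite{PPT,PPT2}; the paper's approach trades that transparency for cleaner global bookkeeping via the Rees ring.

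The one step in your outline that I would call a genuine gap is the ``standard $\widetilde{\rm Gr}$/symbol argument.'' You assert that $hh^{\gamma,W}(\calm,\gamma)$ depends only on the $K$-class of $\widetilde{\rm Gr}(\widehat{\calm})$, but $\calm^W$ is already a $\widehat{\calw}_X$-module and carries no evident filtration with that associated graded; establishing this reduction is precisely what the paper's Rees machinery (Section~4.3) is for. If you want to keep your componentwise route, you still need to import the Rees argument (or an equivalent filtration argument) $\gamma$-by-$\gamma$ to justify replacing $\calm^W$ by its symbol before invoking the local $\gamma$-twisted trace computation.
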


\begin{remark}
The wedge product of differential forms on $IQ_X$ used in Theorem \ref{thm:obfld-ss} is the wedge product on each component of $IQ_X$. 
\end{remark}

The proof of Theorem \ref{thm:obfld-ss} occupies the next two subsections.  Our basic idea is to generalize the Bressler-Nest-Tsygan proof in \cite{BNT} to the $\Gamma$-equivariant setting. 

\subsection{Rees construction}
We consider the Rees ring $\calr\widehat{\cale}_X$ associated to the filtration $\{\calf^\bullet\widehat{\cale}_X\}$ of $\widehat{\cale}_X$: 
$$
\calr\widehat{\cale}_X:=\bigoplus_{p} \hbar^p \calf^p\widehat{\cale}_X. 
$$
We list a few well-known properties of $\calr\widehat{\cale}_X$ without proofs.
\begin{proposition}\label{prop:rees} (c.f. \cite{BNT})
\begin{enumerate}
\item Let ${\rm Gr}\widehat{\cale}_X$ be the associated graded ring of $\widehat{\cale}_X$ with respect to the filtration $\calf^\bullet\widehat{\cale}_X$. There are natural algebra homomorphisms 
\[
\sigma^{\calr\widehat{\cale}}: \calr\widehat{\cale}_X\stackrel{\rm Gr}{\longrightarrow} {\rm Gr}\widehat{\cale}_X\stackrel{\iota}{\longrightarrow} \calo_X.
\]
\item (\cite[I, Proposition 4.5.1]{BNT}) There is a natural flat embedding by mapping $\hbar \xi$ to $\xi$ along the fiber direction of $T^*M$,
$$
i^{\calr\widehat{\cale}}:\calr\widehat{\cale} _X\longrightarrow \widehat{\calw}_X(0). 
$$ 
\item Define $\calr\widehat{\cale}_X[\hbar^{-1}]:=\calr\widehat{\cale}_X\otimes_{\complex[[\hbar]]}\complex ((\hbar))$. There is a natural identification 
\[
\calr\widehat{\cale}_X[\hbar^{-1}]\cong \widehat{\cale}_X((\hbar)):=\widehat{\cale}_X\otimes_\complex \complex((\hbar)).
\]
\item Given a $\widehat{\cale}_X$-module $\widehat{\calm}$ with a filtration $\calf^\bullet\widehat{\calm}$, define an $\calr\widehat{\cale}_X$-module by
\[
\calr\widehat{\calm}:=\bigoplus_p \hbar^p \calf^p \widehat{\calm}. 
\]
One has the following natural isomorphisms of $\calo_X$-modules,
\begin{eqnarray}
\label{eq:hbar-1}\calr \widehat{\calm}\otimes_{\calr \widehat{\cale}_X} \calr\widehat{\cale}_X[\hbar^{-1}]&\cong& \widehat{\calm}\otimes_{\widehat{\cale}_X}\widehat{\cale}_X[\hbar^{-1},\hbar],\\
\label{eq:gr}\calr\widehat{\calm}\otimes _{\calr\widehat{\cale}_X}\calo_X&\cong&{\rm Gr}\widehat{\calm}\otimes_{{\rm Gr}\widehat{\cale}_X}\calo_X.
\end{eqnarray}
\end{enumerate}
In addition, when a finite group $\Gamma$ acts on $M$, the same results hold for 
$\widehat{\cale}_X\rtimes \Gamma$, $\widehat{\calw}_X(0)\rtimes \Gamma$, $\calr\widehat{\cale}_X\rtimes \Gamma$, ${\rm Gr}\widehat{\cale}_X\rtimes \Gamma$, $\calo_X \rtimes \Gamma$ and 
a $\Gamma$-equivariant $\widehat{\cale}_X$-module $\widehat{\calm}$. 
\end{proposition}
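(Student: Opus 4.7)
The plan is to treat parts (1), (3), and (4) as formal consequences of the Rees construction applied to a filtered ring/module, to import part (2) directly from \cite{BNT}, and to deduce the $\Gamma$-equivariant statements from the observation that $\Gamma$ acts on $\widehat{\cale}_X$ by filtered automorphisms (since it acts by holomorphic symplectomorphisms on $X = T^*M$).

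For part (1), I would first exhibit $\sigma^{\calr\widehat{\cale}}$ as the composition of the reduction mod $\hbar$ map $\calr\widehat{\cale}_X \twoheadrightarrow \calr\widehat{\cale}_X / \hbar \calr\widehat{\cale}_X$ with the tautological identification of this quotient with ${\rm Gr}\widehat{\cale}_X$: in degree $p$ the subspace $\hbar \calf^{p-1} \subset \calf^p$ maps to zero, so the quotient is $\bigoplus_p \calf^p\widehat{\cale}_X / \calf^{p-1}\widehat{\cale}_X$. The second arrow $\iota$ is the principal symbol map, which identifies ${\rm Gr}\widehat{\cale}_X$ with a subsheaf of $\calo_X$ (concretely, with polynomial functions along the fibers of $\pi_M$); this is the classical fact for microdifferential operators. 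Part (3) is formal: inverting $\hbar$ in $\calr\widehat{\cale}_X = \bigoplus_p \hbar^p \calf^p\widehat{\cale}_X$ collapses the filtration degrees, giving $\widehat{\cale}_X \otimes_\complex \complex((\hbar))$. For part (4), the isomorphism \eqref{eq:hbar-1} follows by the same $\hbar^{-1}$-localization argument, while \eqref{eq:gr} is obtained by reducing mod $\hbar$: tensoring $\calr\widehat{\calm}$ with $\calo_X = \calr\widehat{\cale}_X / \hbar\calr\widehat{\cale}_X \otimes_{{\rm Gr}\widehat{\cale}_X} \calo_X$ yields ${\rm Gr}\widehat{\calm} \otimes_{{\rm Gr}\widehat{\cale}_X} \calo_X$ by right exactness of the tensor product and the same telescoping as in (1).

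Part (2) is the only genuinely non-formal statement: the map $i^{\calr\widehat{\cale}}$ is constructed fiberwise on $T^*M$ by sending the symbol $\xi$ of a fiber coordinate to $\hbar \xi$ in $\widehat{\calw}_X(0)$, compatibly with the order filtration, and flatness is the content of \cite[I, Proposition 4.5.1]{BNT}. I would simply quote this result; the main point is that the star product on $\widehat{\calw}_X(0)$, when restricted to Rees elements, reproduces the composition of microdifferential operators up to the $\hbar$-rescaling.

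For the $\Gamma$-equivariant extension, the only observation needed is that a holomorphic $\Gamma$-action on $M$ induces a symplectic action on $X$, and hence an action on $\widehat{\cale}_X$, $\widehat{\calw}_X(0)$, and the principal symbol sheaf $\calo_X$ which preserves the order filtration. Consequently, the Rees construction, the morphisms $\sigma^{\calr\widehat{\cale}}$, $i^{\calr\widehat{\cale}}$, and the isomorphisms \eqref{eq:hbar-1} and \eqref{eq:gr} are all $\Gamma$-equivariant; applying the functor $(-) \rtimes \Gamma$ to each piece yields the corresponding statements for the crossed product sheaves, using that $(-) \rtimes \Gamma$ is exact and commutes with $\hbar$-adic and filtration-theoretic constructions. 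The only mild subtlety, which I would note explicitly, is checking that the flatness in (2) persists after crossing with $\Gamma$; this is immediate since $\widehat{\calw}_X(0) \rtimes \Gamma$ is free as a right $\widehat{\calw}_X(0)$-module, so flatness over $\calr\widehat{\cale}_X$ implies flatness over $\calr\widehat{\cale}_X \rtimes \Gamma$.
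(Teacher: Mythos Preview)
Your proposal is correct and in fact provides substantially more detail than the paper itself: the paper states this proposition ``without proofs'' as a list of well-known properties, citing \cite{BNT} (in particular \cite[I, Proposition 4.5.1]{BNT} for part (2)). Your outline of the formal Rees-construction arguments for (1), (3), (4), the appeal to \cite{BNT} for (2), and the $\Gamma$-equivariant extension via filtered automorphisms is entirely appropriate and matches what the authors take for granted.
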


By Proposition \ref{prop:rees} (3), we have natural morphisms
\begin{equation}\label{eq:hbar-incl}
\iota^{\hbar^{-1}, \calr\widehat{\cale}}:\calr\widehat{\cale}_X\longrightarrow \widehat{\cale}_X[\hbar^{-1}, \hbar]\longleftarrow \widehat{\cale}_X:\iota^{
\hbar^{-1},\widehat{\cale}}.
\end{equation}

Following the idea developed in Sec. \ref{subsec:class}, when a finite group $\Gamma$ group acts on a manifold $M$ and therefore also $X=T^*M$, we view $\widehat{\cale}_{Q_X}:=\widehat{\cale}_X\rtimes \Gamma$, $\calr\widehat{\cale}_{Q_X}:=\calr\widehat{\cale}_X\rtimes \Gamma$, ${\rm Gr}\widehat{\cale}_{Q_X}:={\rm Gr}\widehat{\cale}_X\rtimes \Gamma$, $\calo_{Q_X}:=\calo_X\rtimes \Gamma$, $\widehat{\calw}(0)_{Q_X}:=\widehat{\calw}(0)_X\rtimes \Gamma$, and $\widehat{\calw}_{Q_X}:=\widehat{\calw}_X\rtimes \Gamma$ as sheaves of algebras over the orbifold $Q_X=X/\Gamma$.  And similarly a $\Gamma$-equivariant $\widehat{\cale}_X$-module $\widehat{\calm}$ is viewed as a sheaf of $\widehat{\cale}_{Q_X}:=\widehat{\cale}_X\rtimes \Gamma$-module $\widehat{\calm}_{Q_X}$ over $Q_X$. 

A crucial observation is that Theorem \ref{thm:bnt} applies to the sheaves of algebras introduced above and defines Chern character maps on the corresponding K-groups of perfect complexes of modules. These Chern characters are denoted by ${\rm ch}^A(-)$ with $A$ being relevant sheaves of algebras. We apply Proposition \ref{prop:rees} to study the Chern character of $\widehat{\calm}_{Q_X}$: 

\begin{proposition}\label{prop:gr} 
\[
\sigma_*^{\calr\widehat{\cale}}{\rm ch}^{\calr\widehat{\cale}}(\calr\widehat{\calm}_{Q_X})=\iota_*{\rm ch}^{{\rm Gr}\widehat{\cale}}({\rm Gr}\widehat{\calm}_{Q_X})={\rm ch}^\calo(\widehat{\calm}_{Q_X}\otimes_{\widehat{\cale}_{Q_X}}\calo_{Q_X})
\]
\end{proposition}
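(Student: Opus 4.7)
The plan is to deduce both equalities from the naturality of the Chern character construction of Theorem~\ref{thm:bnt} with respect to homomorphisms of sheaves of algebras, combined with the factorization $\sigma^{\calr\widehat{\cale}} = \iota \circ {\rm Gr}$ from Proposition~\ref{prop:rees}(1). Inspection of the bullets of Theorem~\ref{thm:bnt} shows that for any homomorphism $f: A \rightarrow B$ of sheaves of algebras satisfying the hypotheses there, the base-change functor $P \mapsto P\stackrel{L}{\otimes}_A B$ sends perfect complexes of $A$-modules supported on $Z$ to perfect complexes of $B$-modules supported on $Z$ and intertwines the Chern character with the induced map $f_{*}$ on Hochschild (and cyclic) homology sheaves: ${\rm ch}^{B}(P\stackrel{L}{\otimes}_{A}B) = f_{*}\,{\rm ch}^{A}(P)$. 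The whole argument is a clean bookkeeping exercise built from this observation.

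For the first equality I would apply base change along ${\rm Gr}:\calr\widehat{\cale}_{Q_X}\rightarrow {\rm Gr}\widehat{\cale}_{Q_X}$, followed by $\iota_{*}$. The key step is the identification $\calr\widehat{\calm}_{Q_X}\stackrel{L}{\otimes}_{\calr\widehat{\cale}_{Q_X}}{\rm Gr}\widehat{\cale}_{Q_X} \simeq {\rm Gr}\widehat{\calm}_{Q_X}$. Since $\calr\widehat{\calm}$ is $\hbar$-torsion-free by construction, the short exact sequence $0 \rightarrow \calr\widehat{\cale}_{Q_X}\xrightarrow{\hbar}\calr\widehat{\cale}_{Q_X}\rightarrow {\rm Gr}\widehat{\cale}_{Q_X} \rightarrow 0$ presents ${\rm Gr}\widehat{\cale}_{Q_X}$ as a perfect $\calr\widehat{\cale}_{Q_X}$-complex against which the derived tensor product with $\calr\widehat{\calm}_{Q_X}$ collapses to the underived one, which by equation~(\ref{eq:gr}) is ${\rm Gr}\widehat{\calm}_{Q_X}$.

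For the second equality I would analogously apply base change along $\iota:{\rm Gr}\widehat{\cale}_{Q_X}\rightarrow \calo_{Q_X}$. Goodness of the filtration on $\calm$ makes ${\rm Gr}\widehat{\calm}_{Q_X}$ a coherent ${\rm Gr}\widehat{\cale}_{Q_X}$-module, locally admitting a finite free resolution, so once again the derived tensor product $\stackrel{L}{\otimes}_{{\rm Gr}\widehat{\cale}_{Q_X}}\calo_{Q_X}$ computes the naive one, namely $\widetilde{{\rm Gr}}(\widehat{\calm}_{Q_X}) = \calo_{Q_X}\otimes_{{\rm Gr}\widehat{\cale}_{Q_X}}{\rm Gr}\widehat{\calm}_{Q_X}$. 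The expression $\widehat{\calm}_{Q_X}\otimes_{\widehat{\cale}_{Q_X}}\calo_{Q_X}$ appearing in the statement is a shorthand for this symbol module, since no direct algebra map $\widehat{\cale}\rightarrow \calo$ exists. The main obstacle is a bookkeeping one: verifying the sheaf-level naturality of the BNT Chern character (implicit in~\cite{BNT}) and checking that every reduction above descends through $(\cdot)\rtimes\Gamma$ to $Q_X$. The latter is automatic since each of the homomorphisms ${\rm Gr}$, $\iota$, $\sigma^{\calr\widehat{\cale}}$ is $\Gamma$-equivariant, as already built into Proposition~\ref{prop:rees}.
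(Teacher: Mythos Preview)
Your approach is essentially the same as the paper's: the paper's proof is the single line ``This is a direct corollary of Proposition~\ref{prop:rees}(1) and Eq.~(\ref{eq:gr}),'' and you have simply unpacked that corollary by making explicit the naturality of the BNT Chern character under base change and the module identifications needed at each step. One minor overclaim: for the second equality, a finite free resolution of ${\rm Gr}\widehat{\calm}_{Q_X}$ does not by itself force the higher $\mathrm{Tor}$'s over ${\rm Gr}\widehat{\cale}_{Q_X}$ to vanish, but this is harmless since only the $K$-theory class of the derived base change matters for ${\rm ch}^{\calo}$, and that class is by definition the symbol class you want.
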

\begin{proof}
This is a direct corollary of Proposition \ref{prop:rees} (1),  and Eq. (\ref{eq:gr}).
\end{proof}

\begin{proposition}\label{prop:hbar}
\[
\iota^{\hbar^{-1}, \calr\widehat{\cale}}_*{\rm ch}^{\calr\widehat{\cale}}(\calr\widehat{\calm}_{Q_X})={\rm ch}^{\calr\widehat{\cale}[\hbar^{-1}]}(\calr\widehat{\calm}_{Q_X}\otimes_{\calr\widehat{\cale}_{Q_X}} \calr\widehat{\cale}_{Q_X}[\hbar^{-1}] )=\iota^{\hbar^{-1}, \widehat{\cale}}_*({\rm ch}^{\widehat{\cale}}(\widehat{\calm}_{Q_X})).
\]
\end{proposition}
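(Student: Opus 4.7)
The plan is to invoke the standard functoriality of the Chern character construction (as furnished by Theorem~\ref{thm:bnt}) with respect to the two algebra homomorphisms in~\eqref{eq:hbar-incl}, and then to identify the resulting localized modules using Proposition~\ref{prop:rees}(4).

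More precisely, I would proceed in three steps. \textbf{Step 1.} For any (flat) morphism of sheaves of algebras $\varphi: A \to B$ on a topological space $Q$ and any perfect complex $\calf^\bullet$ of $A$-modules, the induced map $\varphi_*: H^0_Z(Q;\calh\calh(A)) \to H^0_Z(Q;\calh\calh(B))$ sends ${\rm ch}^A(\calf^\bullet)$ to ${\rm ch}^B(\calf^\bullet \otimes_A^L B)$. This is a formal consequence of the Bressler--Nest--Tsygan construction, since both classes are represented by the composite in the second bullet of Theorem~\ref{thm:bnt} and this composite is natural in the pair $(A,\calf^\bullet)$; the extension from ``Euler class'' to the full Chern character (i.e.\ to the cyclic refinement) is obtained by naturality of the Hochschild--to--negative-cyclic factorization.

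\textbf{Step 2.} I apply Step~1 to the algebra morphism $\iota^{\hbar^{-1},\calr\widehat{\cale}}: \calr\widehat{\cale}_{Q_X} \to \calr\widehat{\cale}_{Q_X}[\hbar^{-1}]$ and to the perfect complex $\calr\widehat{\calm}_{Q_X}$, obtaining the first equality
\[
\iota^{\hbar^{-1},\calr\widehat{\cale}}_*{\rm ch}^{\calr\widehat{\cale}}(\calr\widehat{\calm}_{Q_X}) \;=\; {\rm ch}^{\calr\widehat{\cale}[\hbar^{-1}]}\!\bigl(\calr\widehat{\calm}_{Q_X}\otimes_{\calr\widehat{\cale}_{Q_X}}\calr\widehat{\cale}_{Q_X}[\hbar^{-1}]\bigr).
\]
Symmetrically, I apply Step~1 to $\iota^{\hbar^{-1},\widehat{\cale}}: \widehat{\cale}_{Q_X} \to \widehat{\cale}_{Q_X}[\hbar^{-1},\hbar]$ and $\widehat{\calm}_{Q_X}$ to obtain
\[
\iota^{\hbar^{-1},\widehat{\cale}}_*{\rm ch}^{\widehat{\cale}}(\widehat{\calm}_{Q_X}) \;=\; {\rm ch}^{\widehat{\cale}[\hbar^{-1},\hbar]}\!\bigl(\widehat{\calm}_{Q_X}\otimes_{\widehat{\cale}_{Q_X}}\widehat{\cale}_{Q_X}[\hbar^{-1},\hbar]\bigr).
\]

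\textbf{Step 3.} Proposition~\ref{prop:rees}(3) identifies $\calr\widehat{\cale}_{Q_X}[\hbar^{-1}]$ with $\widehat{\cale}_{Q_X}[\hbar^{-1},\hbar]$ as sheaves of algebras on $Q_X$, and Eq.~\eqref{eq:hbar-1} (extended $\Gamma$-equivariantly as in the last sentence of Proposition~\ref{prop:rees}) identifies the two localized modules appearing on the right of Step~2 under this identification. Since ${\rm ch}^{(-)}$ is defined on the level of the module over the localized algebra, both equalities in the proposition are established.

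\textbf{Potential obstacle.} The only substantive point to verify is the functoriality asserted in Step~1 at the level of the cyclic (not merely Hochschild) refinement used in $\calh\calc^-$. This follows from~\cite[Theorem 2.1.1]{BNT}, since the Chern class factors through $\calh\calc^-$ functorially in the algebra; but one must be mindful that all arrows in~\eqref{eq:hbar-incl} are flat (so that derived tensor products agree with ordinary ones and perfect complexes are sent to perfect complexes), which is ensured by Proposition~\ref{prop:rees}(2)--(3). No further orbifold-specific difficulty arises, because the crossed product operations commute with all of the constructions used.
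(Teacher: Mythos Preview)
Your proposal is correct and follows essentially the same route as the paper: the paper's proof is the one-liner ``This is a direct corollary of Eq.~(\ref{eq:hbar-1}), (\ref{eq:hbar-incl}), and Proposition~\ref{prop:rees}(3),'' and your Steps~1--3 are precisely an unpacking of what that corollary amounts to (functoriality of ${\rm ch}$ along the two maps in~\eqref{eq:hbar-incl}, together with the identification of the localized algebras and modules). One small quibble: in your ``Potential obstacle'' you cite Proposition~\ref{prop:rees}(2) for flatness, but part~(2) concerns the embedding into $\widehat{\calw}_X(0)$; the flatness you actually need is that of localization at the central element $\hbar$, which is automatic.
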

\begin{proof} 
This is a direct corollary of Eq. (\ref{eq:hbar-1}), (\ref{eq:hbar-incl}), and Proposition \ref{prop:rees}, (3).
\end{proof}

There is a natural map $\sigma^{\widehat{\calw}(0)}: \widehat{\calw}_X(0)\to \calo_X$ defined by taking the quotient by the two sided (sheaf of)  ideal(s) generated by $\hbar$. It is easy to check that 
\begin{equation}\label{eq:sigma}
\sigma^{\calr\widehat{\cale}}=\sigma^{\widehat{\calw}(0)}\circ i^{\calr\widehat{\cale}}: \calr\widehat{\cale}_{Q_X}\longrightarrow \calo_{Q_X}. 
\end{equation}
\begin{proposition}
\label{prop:symbol}
\[
\sigma_*^{\calr\widehat{\cale}}({\rm ch}^{\calr\widehat{\cale}}(\calr\widehat{\calm}_{Q_X}))= \sigma_*^{\widehat{\calw}(0)}\circ i^{\calr\widehat{\cale}}_*({\rm ch}^{\widehat{\cale}}(\calr\widehat{\calm}_{Q_X}))={\rm ch}^{\calo}(\calr\widehat{\calm}_{Q_X}\otimes_{\widehat{\cale}_{Q_X}} \calo_{Q_X}).
\]
\begin{proof}
This is a direct corollary of Eq. (\ref{eq:sigma}).
\end{proof}
\end{proposition}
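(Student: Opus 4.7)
The plan is to derive Proposition \ref{prop:symbol} as a direct formal consequence of two ingredients: the algebra-map factorization $\sigma^{\calr\widehat{\cale}}=\sigma^{\widehat{\calw}(0)}\circ i^{\calr\widehat{\cale}}$ of Eq. (\ref{eq:sigma}), and the naturality of the Chern character supplied by Theorem \ref{thm:bnt}.

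First, I would record the precise naturality property needed. For any morphism $\phi:\cala\to\calb$ of sheaves of algebras on $Q_X$ satisfying the hypotheses of Theorem \ref{thm:bnt}, the induced pushforwards on $\calh\calh(-)$ and $\calh\calc^-(-)$ intertwine the Chern classes, in the sense that for a perfect complex $\calp$ of $\cala$-modules supported in $Z$ one has
\[
\phi_*\big({\rm ch}^\cala_{Z,0}(\calp)\big)\,=\,{\rm ch}^\calb_{Z,0}\big(\calp\stackrel{L}{\otimes}_{\cala}\calb\big).
\]
This is built into the BNT construction and is visible directly from the ``evaluation'' description of $hh^\cala_{Z,0}$ in the second bullet of Theorem \ref{thm:bnt} together with its negative-cyclic refinement.

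Second, apply this naturality to $\phi=i^{\calr\widehat{\cale}}$ and to $\phi=\sigma^{\widehat{\calw}(0)}$ in turn. Since Eq. (\ref{eq:sigma}) asserts that $\sigma^{\calr\widehat{\cale}}=\sigma^{\widehat{\calw}(0)}\circ i^{\calr\widehat{\cale}}$ at the level of algebras, functoriality of the Hochschild/cyclic complexes gives the same composition law for the induced pushforwards, which is exactly the first equality of the proposition. Composing the two naturality squares, using flatness of $i^{\calr\widehat{\cale}}$ from Proposition \ref{prop:rees}(2) to reduce the derived tensor product to the ordinary one along the intermediate algebra, yields ${\rm ch}^\calo(\calr\widehat{\calm}_{Q_X}\stackrel{L}{\otimes}_{\calr\widehat{\cale}_{Q_X}}\calo_{Q_X})$, which coincides with the right-hand side of the proposition.

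The only technical point requiring care is to verify that $\calr\widehat{\calm}_{Q_X}$ actually defines a class in the appropriate K-group of perfect complexes to which Theorem \ref{thm:bnt} applies, and that the derived tensor product collapses to the ordinary tensor product appearing in the statement. Both follow from the assumption that the filtration on $\calm$ is good: locally, $\calr\widehat{\calm}_{Q_X}$ admits a finite resolution by free $\calr\widehat{\cale}_{Q_X}$-modules, and the flatness step above handles the passage through $\widehat{\calw}_X(0)\rtimes\Gamma$. I anticipate no genuine obstacle beyond this routine bookkeeping, consistent with the paper's claim that the proposition is a direct corollary of Eq. (\ref{eq:sigma}).
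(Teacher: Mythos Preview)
Your proposal is correct and follows essentially the same approach as the paper: the paper's proof is the single sentence ``This is a direct corollary of Eq.~(\ref{eq:sigma}),'' and your argument is precisely the unpacking of that sentence---the factorization $\sigma^{\calr\widehat{\cale}}=\sigma^{\widehat{\calw}(0)}\circ i^{\calr\widehat{\cale}}$ combined with the naturality of the Chern character under algebra morphisms supplied by Theorem~\ref{thm:bnt}. Your additional remarks on flatness and perfectness are reasonable technical glosses that the paper leaves implicit.
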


\begin{proposition}\label{prop:diagram-alg} The following diagram commutes
\[
\begin{diagram}
\node{\widehat{\calw}(0)_{Q_X}}\arrow{e,t,t}{\iota^{\hbar^{-1}, \widehat{\calw}}}\node{\widehat{\calw}_{Q_X}}\\
\node{\calr\widehat{\cale}_{Q_X}} \arrow{n,l,t}{i^{\calr \widehat{\cale} }}\arrow{e,t,t}{\iota^{\hbar^{-1}, \calr\widehat{\cale}}}
\node{\calr\widehat{\cale}_{Q_X}[\hbar^{-1}]}\arrow{n,l,t}{i^{\calr\widehat{\cale}[\hbar^{-1}]}}
\node{\widehat{\cale}_{Q_X}}\arrow{nw,t,t}{i^{\widehat{\cale}}}\arrow{w,t,t}{\iota^{\hbar^{-1}, \widehat{\cale}}}
\end{diagram},
\]
where $\iota^{\hbar^{-1}, \widehat{\calw}}$ is the natural inclusion map  $\widehat{\calw}(0)_{Q_X}\hookrightarrow \widehat{\calw}_{Q_X}$, and $i^{\calr\widehat{\cale}[\hbar^{-1}]}$ is the natural extension of $i^{\calr\widehat{\cale}}$.
\end{proposition}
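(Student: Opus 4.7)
The plan is to verify the two elementary commutativity statements encoded in the diagram, namely
\begin{equation*}
\iota^{\hbar^{-1},\widehat{\calw}} \circ i^{\calr\widehat{\cale}} \;=\; i^{\calr\widehat{\cale}[\hbar^{-1}]}\circ \iota^{\hbar^{-1},\calr\widehat{\cale}}
\quad\text{and}\quad
i^{\widehat{\cale}} \;=\; i^{\calr\widehat{\cale}[\hbar^{-1}]}\circ \iota^{\hbar^{-1},\widehat{\cale}},
\end{equation*}
and then observe that all six morphisms are obtained by crossing with $\Gamma$. In more detail, I would first reduce to the case of sheaves on $X$ (without $\Gamma$), because each of the algebras in the diagram is of the form $\calb\rtimes \Gamma$ for a $\Gamma$-equivariant sheaf of algebras $\calb$ on $X$, and each morphism is the identity on the $\Gamma$-factor and equivariant in $\calb$. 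So commutativity on $Q_X$ will follow from commutativity on $X$.

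Next, for the square on the left, the key point is to unwind the definitions of the $\hbar$-localizations. By Proposition~\ref{prop:rees}(3), $\iota^{\hbar^{-1},\calr\widehat{\cale}}$ is the localization at $\hbar$, and by construction $\iota^{\hbar^{-1},\widehat{\calw}}$ is the localization at $\hbar$ of $\widehat{\calw}(0)_X \hookrightarrow \widehat{\calw}_X = \widehat{\calw}(0)_X \otimes_{\complex[[\hbar]]} \complex((\hbar))$. Since $\hbar$ is a non-zero-divisor in $\widehat{\calw}(0)_X$ (and $\hbar$ is sent to $\hbar$ under $i^{\calr\widehat{\cale}}$), the morphism $i^{\calr\widehat{\cale}}$ factors uniquely through the $\hbar$-localization, and this factorization is precisely $i^{\calr\widehat{\cale}[\hbar^{-1}]}$, by the very definition of the latter as the natural extension of $i^{\calr\widehat{\cale}}$. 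The universal property of localization then delivers the commutativity of the square.

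For the triangle on the right, I would unwind $\iota^{\hbar^{-1},\widehat{\cale}}$ using the identification $\calr\widehat{\cale}_X[\hbar^{-1}]\cong \widehat{\cale}_X((\hbar))$ of Proposition~\ref{prop:rees}(3): under this identification, $\iota^{\hbar^{-1},\widehat{\cale}}$ is the obvious inclusion $\widehat{\cale}_X \hookrightarrow \widehat{\cale}_X((\hbar))$ given by extension of scalars. Composing with $i^{\calr\widehat{\cale}[\hbar^{-1}]}$ amounts to embedding $\widehat{\cale}_X$ into $\widehat{\calw}_X$ by sending an order-$p$ operator first to $\hbar^{-p}$ times its Rees representative in $\calr\widehat{\cale}_X[\hbar^{-1}]$ and then applying $i^{\calr\widehat{\cale}}$ (extended over $\complex((\hbar))$); by the definition of $i^{\calr\widehat{\cale}}$ in Proposition~\ref{prop:rees}(2), which multiplies fiber coordinates by $\hbar$, the two $\hbar$-factors cancel and the result is precisely the natural inclusion $\widehat{\cale}_X \hookrightarrow \widehat{\calw}_X$ of the introduction to this subsection, which is $i^{\widehat{\cale}}$.

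The only mildly non-trivial point is the bookkeeping in the triangle: one must check carefully that the power of $\hbar$ introduced by $\iota^{\hbar^{-1},\widehat{\cale}}$ exactly matches the power of $\hbar$ produced by $i^{\calr\widehat{\cale}}$ on filtration-$p$ symbols, so that the composite lands in $\widehat{\calw}(0)_X \subset \widehat{\calw}_X$ and coincides with the classical flat embedding. This is essentially the content of Proposition~\ref{prop:rees}(2)--(3) and requires only verifying the match on local generators (the $x_i$ and the symbols $\xi_j$), after which everything extends by continuity and $\complex[[\hbar]]$-linearity.
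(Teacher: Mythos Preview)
Your proposal is correct and is essentially the paper's approach made explicit: the paper's own proof reads in its entirety ``This is a straightforward verification using the definitions,'' and your argument simply spells out that verification by reducing to the underlying sheaves on $X$, invoking the universal property of $\hbar$-localization for the square, and matching the $\hbar$-powers via Proposition~\ref{prop:rees}(2)--(3) for the triangle.
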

\begin{proof}
This is a straightforward verification using the definitions. 
\end{proof} 

We denote the hypercohomology $H^{-\bullet}(IQ_X; \calh\calc^{per}(\widehat{\calw}_{Q_X}))$ by ${HC}_{\bullet}^{per}(\widehat{\calw}_{Q_X})$, where $\calh\calc^{per}(-)$ is the sheaf of periodic cyclic homology.  Similar notation is used for other sheaves of algebras on $Q_X$ (and for other versions of cyclic homology). Note that for any sheaf $\cala$ of algebras on $Q_X$ (or on any topological space for that matter), there is a natural map $\calh\calc^{-}(\cala) \rightarrow 
\calh\calc^{per}(\cala)$ in the derived category of sheaves of $\complex$-vector spaces on $Q_X$. Hence, one may view $\mathrm{ch}^{\cala}_{Z,i}$ (see Theorem~\ref{thm:bnt}) as a map to $HC^{per}_0(\cala)$.  The following proposition is a direct corollary of Proposition \ref{prop:diagram-alg}.
\begin{proposition}\label{prop:diagram-ch}
The following diagram commutes.
{\tiny
\[
\begin{diagram}
\node{}\node{K^0_{\Lambda}(\widehat{\calw}(0)_{Q_X})}\arrow[2]{e,t,t}{\iota^{\hbar^{-1}, \widehat{\calw}}_*}\arrow{sw,t,t}{{\rm ch}^{\widehat{\calw}(0)}}\node{}\node{K^0_{\Lambda}(\widehat{\calw}_{Q_X})}\arrow{sw,t,t}{{\rm ch}^{\widehat{\calw}}}\\
\node{HC^{per}_0(\widehat{\calw}(0)_{Q_X})}\arrow[2]{e,t,t,3}{\iota^{\hbar^{-1}, \widehat{\calw}}_*}\node{}\node{HC^{per}_0(\widehat{\calw}_{Q_X})}\\
\node{}\node{K^0_{\Lambda}(\calr\widehat{\cale}_{Q_X})}\arrow{sw,t,..}{{\rm ch}^{\calr\widehat{\cale}}}\arrow[2]{n,l,..,1}{i^{\calr\widehat{\cale}}_*}\arrow[2]{e,t,..,1}{i^{\hbar^{-1}, \calr\widehat{\cale}}_*}\node{}\node{K^0_{\Lambda}(\calr\widehat{\cale}_{Q_X}[\hbar^{-1}])}\arrow{sw,l,..,1}{{\rm ch}^{\calr\widehat{\cale}[\hbar^{-1}]}}\arrow[2]{n,t,..,1}{i^{\calr\widehat{\cale}[\hbar^{-1}]}_*}
\node{K^0_\Lambda(\widehat{\cale}_{Q_X})}\arrow{w,t,..}{\iota^{\hbar^{-1}, \widehat{\cale}}_*}\arrow{nnw,t,t}{i^{\widehat{\cale}}_*}\arrow{sw,t,t}{{\rm ch}^{\widehat{\cale}}}\\
\node{HC^{per}_0(\calr\widehat{\cale}_{Q_X})}\arrow[2]{n,t,l}{i^{\calr\widehat{\cale}}_*}\arrow[2]{e,t,t}{\iota^{\hbar^{-1},\calr\widehat{\cale}}_*}
\node{}\node{HC^{per}_0(\calr\widehat{\cale}_{Q_X}[\hbar^{-1}])}\arrow[2]{n,l,t,1}{i_*^{\calr\widehat{\cale}[\hbar^{-1}]}}\node{HC^{per}_0(\widehat{\cale}_{Q_X})}\arrow{nnw,t,t,3}{i^{\widehat{\cale}}_*}\arrow{w,t,t}{i^{\hbar^{-1}, \widehat{\cale}}_*}
\end{diagram}
\]
}
\end{proposition}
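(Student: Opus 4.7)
My plan is to reduce the diagram in Proposition~\ref{prop:diagram-ch} to the algebra diagram of Proposition~\ref{prop:diagram-alg} by invoking a single general principle: the Bressler--Nest--Tsygan Chern character is natural in the sheaf of algebras. More precisely, I will first establish that for every unital morphism $f: \cala \to \calb$ of sheaves of algebras on $Q_X$ satisfying the hypotheses of Theorem~\ref{thm:bnt}, the induced maps $f_*: K^0_Z(\cala) \to K^0_Z(\calb)$ (via $\calb \otimes^L_\cala (-)$ on perfect complexes) and $f_*: HC^{per}_0(\cala) \to HC^{per}_0(\calb)$ fit into a commuting square
\[
{\rm ch}^{\calb}_{Z,0} \circ f_* \,=\, f_* \circ {\rm ch}^{\cala}_{Z,0}.
\]
This naturality square will be the basic building block out of which every face of the diagram in Proposition~\ref{prop:diagram-ch} is assembled.

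Once naturality is in hand, the diagram in Proposition~\ref{prop:diagram-ch} decomposes into squares and triangles, each of which is either an instance of this naturality square for one of the algebra morphisms appearing in Proposition~\ref{prop:diagram-alg} (for the four vertical sub-rectangles connecting $K^0_\Lambda$ to $HC^{per}_0$ via the various Chern characters), or is obtained by pasting two such squares along an edge (for the triangular sub-diagrams that encode identities of algebra maps such as $i^{\widehat{\cale}} = i^{\calr\widehat{\cale}[\hbar^{-1}]} \circ \iota^{\hbar^{-1}, \widehat{\cale}}$ and $\iota^{\hbar^{-1},\widehat{\calw}} \circ i^{\calr\widehat{\cale}} = i^{\calr\widehat{\cale}[\hbar^{-1}]} \circ \iota^{\hbar^{-1},\calr\widehat{\cale}}$). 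Commutativity of each triangular face then follows from Proposition~\ref{prop:diagram-alg} combined with the functoriality of $K^0_\Lambda(-)$ and $HC^{per}_0(-)$ with respect to composition of algebra morphisms, while commutativity of each vertical square is the naturality statement itself.

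The one genuinely nontrivial step --- and hence the main obstacle --- is the naturality of ${\rm ch}^{\cala}_{Z,0}$. To prove it, I would use the explicit formula given in the second bullet of Theorem~\ref{thm:bnt}: represent a class in $K^0_Z(\cala)$ by a perfect complex $\calf^\bullet$ (locally a direct summand of a bounded complex of finitely generated free $\cala$-modules), and observe that $\calb \otimes^L_\cala \calf^\bullet$ represents $f_*[\calf^\bullet]$ over $\calb$. Applying $f$ termwise to the composite
\[
k \xrightarrow{1 \mapsto \mathrm{id}} \calr\calh om_\cala(\calf^\bullet,\calf^\bullet) \stackrel{\cong}{\longleftarrow} (\calr\calh om_\cala(\calf^\bullet, \cala) \boxtimes \calf^\bullet) \otimes^L_{\cala\otimes \cala^{op}} \cala \xrightarrow{ev \otimes \mathrm{id}} \cala \otimes^L_{\cala\otimes \cala^{op}} \cala
\]
transforms it into the corresponding composite for $\calb \otimes^L_\cala \calf^\bullet$: the identification $f(ev_\cala) = ev_\calb \circ (f\otimes f)$ is built into the definition of the evaluation map, and its extension to Hochschild / periodic cyclic chains is manifestly natural in $\cala$. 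Modulo this chain-level bookkeeping, Proposition~\ref{prop:diagram-ch} is a formal consequence of Proposition~\ref{prop:diagram-alg} together with the pasting principle for commutative squares.
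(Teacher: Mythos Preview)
Your proposal is correct and follows essentially the same approach as the paper: the paper's entire proof is the single sentence ``This is a direct corollary of Proposition~\ref{prop:diagram-alg},'' and your argument simply unpacks what that sentence means---namely, that naturality of the Bressler--Nest--Tsygan Chern character with respect to algebra morphisms, together with functoriality of $K^0_\Lambda(-)$ and $HC^{per}_0(-)$, transports the commutative algebra diagram of Proposition~\ref{prop:diagram-alg} to the diagram in question.
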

\subsection{Proof of Theorem \ref{thm:obfld-ss}}
The following  is a reformulation of  \cite[Theorem 5.13]{PPT2}.
\begin{theorem}(\cite[Theorem 5.13]{PPT2})\label{thm:alg-rr} Let $u$ be the parameter in the definition of cyclic homology, and $Q_M$ (resp., $Q_X$) be the quotient of $M$ (resp., $X$) by $\Gamma$. The following diagram commutes:
\[
\begin{diagram}
\node{HC^{per}_0(\widehat{\calw}_{Q_X}(0))}\arrow{s,t,t}{\iota^{\hbar^{-1}, \widehat{\calw}}_*}\arrow[2]{e,t,t}{\sigma^{\widehat{\calw}(0)}_*}\node{}\node{H^{-\bullet}(\Omega^\bullet_{IQ_X}((u)),d)}\arrow{s,r,t}{\wedge \frac{1}{m}{\rm eu_{Q}(N)}\wedge\pi^{-1}Td_{IQ_M}}\\
\node{HC^{per}_0(\widehat{\calw}_{Q_X})}\arrow[2]{e,t,t}{\mu^{\widehat{\calw}}}\node{}\node{H^{-\bullet}(\Omega^\bullet_{IQ_X}((\hbar))((u)),d)}
\end{diagram}
\]
\end{theorem}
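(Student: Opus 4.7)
The plan is to derive the statement directly from \cite[Theorem 5.13]{PPT2}, which the theorem is explicitly flagged as a reformulation of, so the real work is in matching conventions and translating to the orbifold language of the present paper. The diagram involves three maps on periodic cyclic homology: the $\hbar$-localization $\iota^{\hbar^{-1},\widehat{\calw}}_*$, the symbol (reduction modulo $\hbar$) map $\sigma^{\widehat{\calw}(0)}_*$, and the trace density $\mu^{\widehat{\calw}}$. The first two are tautologically the $\Gamma$-equivariant versions of the corresponding maps on $\widehat{\calw}_X(0)$, since $\widehat{\calw}_{Q_X}(0) = \widehat{\calw}_X(0)\rtimes \Gamma$ as a sheaf of algebras on $Q_X$; and $\mu^{\widehat{\calw}}$ is defined $\gamma$-component by $\gamma$-component via the trace density on $X^{\gamma}$ constructed in Proposition~\ref{prop:localization}.

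The key input is the decomposition of $HC^{per}_{\bullet}(\widehat{\calw}_{Q_X}(0))$ into $\Gamma$-invariants of a sum over $\gamma \in \Gamma$ of $\gamma$-twisted periodic cyclic homologies of $\widehat{\calw}_X(0)$ (see \cite{DE, NPPT}), with the $\gamma$-twisted summand a complex of sheaves on the fixed-point submanifold $X^{\gamma}$. After taking $\Gamma$-invariants this identifies both sides of the diagram with complexes of sheaves on the inertia orbifold $IQ_X$, matching the target $H^{-\bullet}(\Omega^{\bullet}_{IQ_X}((\hbar))((u)),d)$ in the lower right corner. On each fixed locus $X^{\gamma}$, the composition $\mu^{\widehat{\calw}} \circ \iota^{\hbar^{-1},\widehat{\calw}}_*$ of the symbol with the trace density is then computed by the algebraic index theorem for deformation quantization of \cite{FFS,EF,PPT2}, which identifies it with multiplication by the $\gamma$-equivariant characteristic class ${\rm eu}_{\gamma}(N^{\gamma}) \wedge \pi^{-1}Td_{IQ_M}$. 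Summing over $\gamma$ and dividing by the order of the isotropy groups, encoded in the locally constant function $m$ on $IQ_X$, assembles these local contributions into the global factor $\frac{1}{m}{\rm eu}_Q(N) \wedge \pi^{-1}Td_{IQ_M}$; the Todd class is that of $IQ_M$ (and not of $IQ_X$) because $X = T^*M$ is a cotangent bundle, so $TX|_{X^{\gamma}}$ splits as $TX^{\gamma} \oplus N^{\gamma}$ with $TX^{\gamma}$ pulled back from a Lagrangian in $IQ_M$, which collapses the $\widehat{A}$-class of $TX^{\gamma}$ to $\pi^{-1}Td_{IQ_M}$.

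The main obstacle is bookkeeping rather than any conceptual point: one must verify that the trace density $\mu^{\widehat{\calw}}$ used here (via Proposition~\ref{prop:localization} and the construction in \cite{FFS,EF}) agrees with the density of \cite{PPT2} up to a well-understood normalization, that the symbol map $\sigma^{\widehat{\calw}(0)}_*$ at the chain level corresponds to the $\Gamma$-equivariant Hochschild--Kostant--Rosenberg quasi-isomorphism used in loc.\ cit., and that all factors of $2\pi\sqrt{-1}$, the powers of $u$ and $\hbar$, the orientations on the normal bundles, and the distinction between $\pi^{-1}$ and $\pi^*$ match on the nose. Once these conventions are reconciled, the commutativity of the diagram is exactly the content of \cite[Theorem 5.13]{PPT2} applied to the orbifold $Q_X$.
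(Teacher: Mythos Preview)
Your proposal is correct in spirit and would work, but the paper's proof takes a genuinely different route. Rather than decomposing into $\gamma$-components over fixed loci $X^\gamma$ and applying the equivariant algebraic index theorem componentwise, the paper exploits a module-structure argument: both $\sigma^{\widehat{\calw}(0)}_*$ and $\mu^{\widehat{\calw}}$ are morphisms of $H^{-\bullet}(IQ_X,\underline{\complex}((u)))$-modules, so any class $x\in HC^{per}_0(\widehat{\calw}_{Q_X}(0))$ can be written as $\sigma^{\widehat{\calw}(0)}_*(x)\cup {\rm ch}^{\widehat{\calw}(0)}(\mathbf{1})$ for the trivial module $\mathbf{1}$. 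This reduces the commutativity of the whole diagram to computing the single element $\mu^{\widehat{\calw}}\big(\iota^{\hbar^{-1},\widehat{\calw}}_*({\rm ch}^{\widehat{\calw}(0)}(\mathbf{1}))\big)$, which is then obtained by plugging the explicit characteristic class $\frac{1}{\hbar}\omega+\frac{1}{2}\pi_M^*c_1(TX)$ of the quantization $\widehat{\calw}_X(0)$ into \cite[Theorem 5.13]{PPT2}. Your approach trades this reduction for more bookkeeping across components; the paper's trick cleanly isolates the characteristic-class computation and makes the appearance of $Td_{IQ_M}$ (rather than $Td_{IQ_X}$) immediate from the formula for the characteristic class, whereas you recover it via a separate cotangent/Lagrangian argument. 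One small slip: in your second paragraph you call $\iota^{\hbar^{-1},\widehat{\calw}}_*$ ``the symbol'', but it is the $\hbar$-localization; the symbol map is $\sigma^{\widehat{\calw}(0)}_*$.
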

\begin{proof}
Let $i^{IQ}:IQ_X\to Q_X$ be the natural forgetful map. The key observation is that the quasi-isomorphisms $\sigma^{\widehat{\calw}(0)}_*$ and
 $\mu^{\widehat{\calw}}$ constructed in \cite[Theorem 5.13]{PPT2} are morphisms of 
$i^{IQ_X}_*\underline{\complex}_{IQ_X}((u))$-modules on $Q_X$ (where $\underline{\complex}_{IQ_X}((u))$ denotes the 
(locally) constant sheaf whose space of sections over any connected open subset of $IQ_X$ is $\complex((u))$). 

For an element $x$ of ${H}^{-\bullet}(IQ_X; \calh\calc^{per}(\widehat{\calw}_{Q_X}(0)))$, 
$\sigma_*^{\widehat{\calw}(0)}(x)$ is an element of ${H}^{-\bullet}(IQ_X; \underline{\complex}((u)))$. 
Let $\mathbf{1}$ denote the trivial $\widehat{\calw}_{Q_X}(0)$-module. 
Then, we notice that $\sigma_*^{\widehat{\calw}(0)}$ maps $\sigma_*^{\widehat{\calw}(0)}(x)\cup {\rm ch}^{\widehat{\calw}(0)}(\mathbf{1})$, as an element in $HC^{per}_0(\widehat{\calw}_{Q_X}(0))$,  also to $\sigma_*^{\widehat{\calw}(0)}(x)$ in $ {H}^{-\bullet}(IQ_X; \underline{\complex}((u)))$, 
where $\cup$ is the cup product 
$$\cup\,:\,{H}^{-\bullet}(IQ_X; \underline{\complex}((u))) \otimes {H}^{-\bullet}(Q_X; \calh\calc^{per}(\widehat{\calw}_{Q_X}(0)))
 \rightarrow {H}^{-\bullet}(Q_X; \calh\calc^{per}(\widehat{\calw}_{Q_X}(0)))\,\text{.} $$

 As $\sigma_*^{\widehat{\calw}(0)}$ is a quasi-isomorphism, we conclude that $x=[\sigma_*^{\widehat{\calw}(0)}(x)]\cup [{\rm ch}^{\widehat{\calw}(0)}(\mathbf{1})]$ is in \\
${H}^{-\bullet}(Q_X; \calh\calc^{per}(\widehat{\calw}_{Q_X}(0)))$. Since $\mu^{\widehat{\calw}}$ is a $H^{-\bullet}(IQ_X, \underline{\complex}((u)))$-module map, we have 
$$\mu^{\widehat{\calw}}(\iota^{\hbar^{-1},\widehat{\calw}}_*(x)) = \sigma_*^{\widehat{\calw}(0)}(x) \cup \mu^{\widehat{\calw}}(\iota^{\hbar^{-1, \widehat{\calw}}}_*({\rm ch}^{\widehat{\calw}(0)}(\mathbf{1})) )
\,\text{.}  $$
This reduces the proof to computing $\mu^{\widehat{\calw}}(\iota^{\hbar^{-1, \widehat{\calw}}}_* ({\rm ch}^{\widehat{\calw}(0)}(\mathbf{1}))) $. 
This computation is done by the same proof as that of \cite[Theorem 5.13]{PPT2}, but in the holomorphic setting. 
As is computed in \cite[4.5.1]{BNT}, the characteristic class of the quantization $\widehat{\calw}_X(0)$ is equal to $\frac{1}{\hbar}\omega+\frac{1}{2} \pi_M^*c_1(TX)$ with $\omega$ the symplectic form on $T^*M$. 
Substituting this characteristic class into~\cite[Theorem 5.13]{PPT2} yields the desired identity. 
\end{proof}
\noindent{\em Proof of Theorem \ref{thm:obfld-ss}}: The Euler class is the top degree component of the 
Chern character, which is computed by the following steps. 
\begin{eqnarray*}
&{\rm ch}_Q(\calm)&=\mu^{\widehat{\cale}}\circ {\rm ch}^{\widehat{\cale}}(\widehat{\calm})\qquad \text{Definition \ref{dfn:class}}\\
&&=\mu^{\widehat{\calw}}\circ i_*^{\widehat{\cale}}{\rm ch}^{\widehat{\cale}}(\calm)\qquad \text{Proposition \ref{prop:localization}}\\
&&=\mu^{\widehat{\calw}}\circ i_*^{\calr\widehat{\cale}[\hbar^{-1}]} \circ i_*^{\hbar^{-1},\widehat{\cale}}{\rm ch}^{\widehat{\cale}}(\calm)\qquad \text{right front triangle of Proposition \ref{prop:diagram-ch}}\\
&&=\mu^{\widehat{\calw}}\circ i_*^{\calr\widehat{\cale}[\hbar^{-1}]} \circ\iota^{\hbar^{-1}, \calr\widehat{\cale}}_* {\rm ch}^{\calr\widehat{\cale}}(\calr\widehat{\calm}_{Q_X})\qquad \text{Proposition \ref{prop:hbar}}\\
&&=\mu^{\widehat{\calw}}\circ \iota_*^{\hbar^{-1},\widehat{\calw}}\circ i^{\calr\widehat{\cale}}_* \circ  {\rm ch}^{\calr\widehat{\cale}}(\calr\widehat{\calm}_{Q_X})\qquad \text{right front square of Proposition \ref{prop:diagram-ch}}\\
&&=\frac{1}{m}\sigma_*^{\widehat{\calw}(0)}\circ i_*^{\calr\widehat{\cale}}\circ {\rm ch}^{\calr\widehat{\cale}}(\calr\widehat{\calm}_{Q_X})\wedge {\rm eu}_Q(N)\wedge \pi^{-1}Td_{IQ_M}\qquad \text{Theorem \ref{thm:alg-rr}}\\
&&=\frac{1}{m}{\rm ch}^{\calo}(\calr\widehat{\calm}_{Q_X}\otimes_{\widehat{\cale}_{Q_X}} \calo_{Q_X})\wedge {\rm eu}_Q(N)\wedge \pi^{-1}Td_{IQ_M}\qquad \text{Proposition \ref{prop:symbol}}\\
&&=\frac{1}{m}{\rm ch}^{\calo}({\rm Gr}\widehat{\calm}_{Q_X}\otimes_{{\rm Gr}\widehat{\cale}_{Q_X}}\calo_{Q_X}) \wedge {\rm eu}_Q(N)\wedge \pi^{-1}Td_{IQ_M}\qquad \text{Eq. (\ref{eq:gr})}\\
&&=\frac{1}{m}{\rm ch}_Q(\sigma_{\text{char}(\calm)}(\calm))\wedge {\rm eu}_Q(N)\wedge \pi^{-1}Td_{IQ_M}.\qquad\qquad \square
\end{eqnarray*}


\begin{thebibliography}{11}
%
\bibitem[BNT]{BNT} P. Bressler, R. Nest, and B. Tsygan, {\em Riemann-Roch theorems via deformation quantization}, I, II. Adv. Math. \textbf{167} (2002), no. 1, 1-25, 26-73.
%
%
\bibitem[DE]{DE} V. Dolgushev and P. Etingof, \textit{Hochschild cohomology of quantized symplectic orbifolds and the Chen-Ruan cohomology}, 
Int. Math. Res. Not. 2005, no. 27, 1657-1688.
%
\bibitem[EF]{EF} M. Engeli and G. Felder, \textit{A Riemann-Roch-Hirzebruch formula for traces of differential operators}, Ann. Sci. \'{E}c. Norm. Sup\'{e}r. (4) \textbf{41}(2008), No. 4, 621-653.
%
\bibitem[F]{F} B. V. Fedosov, \textit{On $G$-trace and $G$-index in deformation quantization}, Letters in Math. Phys. \textbf{52} (2000), 29-49.
%
\bibitem[FFS]{FFS} B. Feigin, G. Felder and B. Shoikhet, \textit{Hochschild cohomology of the Weyl algebra and traces in deformation quantization}, Duke Math. J. \textbf{127} (2005), No. 3, 487-517.
%
\bibitem[FT]{FT} G. Felder, X. Tang, \textit{Equivariant Lefschetz number of differential operators}, Math. Z., \textbf{266} (2010), no. 2, 451-470.
%
\bibitem[G]{G} S. Guillermou, {\em Lefschetz class of elliptic pairs}, Duke Math. J. \textbf{85} (1996), no. 2, 273-314.
%
\bibitem[K]{K}M. Kashiwara, {\em D-modules and Microlocal Calculus}, Translations of Mathematical
Monographs, \textbf{217} American Math. Soc. (2003).
%
\bibitem[KS]{KS} M. Kashiwara and P. Schapira, {\em Deformation quantization modules}, arXiv:1003.3304, to appear in Ast\'esrique. 
%
%
\bibitem[NPPT]{NPPT} N. Neumaier, M. Pflaum, H. Posthuma and X. Tang, \textit{Homology of formal deformations of proper \'{e}tale groupoids}, J. Reine Angew. Math. \textbf{593}, 117-168 (2006).
%
\bibitem[PPT1]{PPT} M. Pflaum, H. Posthuma and X. Tang, \textit{An algebraic index theorem for orbifolds}, Adv. Math. \textbf{210} (2007), No. 1, 83-121.
%
\bibitem[PPT2]{PPT2} M. Pflaum, H. Posthuma and X. Tang, \textit{Cyclic cocycles in deformation quantization and higher index theorems}, Adv. Math. \textbf{223} (2010), No. 6, 1958-2021.
%
\bibitem[PS]{PS} P. Polesello and P. Schapira, {\em Stacks of quantization-deformation modules
over complex symplectic manifolds}, Int. Math. Res. Notices \textbf{49}, 2637-2664 (2004).
%
%
%
\bibitem[R]{R} A. Ramadoss, {\em Some notes on the Feigin-Losev-Shoikhet integral conjecture}, J. Noncommut. Geom. 
\textbf{2} (2008), 405-448. 
%
\bibitem[SS1]{ss1}P. Schapira and J. Schneider, {\em Elliptic pairs. I. Relative finiteness and duality. Index theorem for elliptic pairs.},  Ast\'erisque No. \textbf{224} (1994), 5-60.
%
\bibitem[SS2]{ss2}P. Schapira, J. Schneiders, {\em Elliptic pairs. II. Euler class and relative index theorem. Index theorem for elliptic pairs}, Ast\'erisque No. \textbf{224} (1994), 61-98.
%
%
\end{thebibliography}
\end{document}